\documentclass[11pt,oneside]{article}

\usepackage[a4paper,top=28mm,bottom=28mm,left=30mm,right=30mm]{geometry}
\usepackage{setspace}
\usepackage[T1]{fontenc}
\usepackage[utf8]{inputenc}
\usepackage{lmodern}
\usepackage{microtype}

\usepackage{amsmath,amssymb,mathtools}
\usepackage{bm}
\usepackage{amsthm}
\numberwithin{equation}{section}

\usepackage{graphicx}
\usepackage{booktabs}
\usepackage{array}
\usepackage{tabularx}
\usepackage{longtable}
\usepackage{multirow}
\usepackage{caption}
\usepackage{subcaption}
\usepackage{placeins}
\usepackage{enumitem}
\usepackage{xcolor}
\setlist{nosep,leftmargin=2.2em}

\usepackage[numbers,sort&compress,square]{natbib}
\usepackage[hidelinks]{hyperref}
\usepackage[nameinlink,noabbrev]{cleveref}

\crefname{equation}{Eq.}{Eqs.}
\crefname{figure}{Fig.}{Figs.}
\crefname{table}{Table}{Tables}
\crefname{theorem}{Theorem}{Theorems}
\crefname{lemma}{Lemma}{Lemmas}
\crefname{corollary}{Corollary}{Corollaries}
\crefname{remark}{Remark}{Remarks}
\crefname{section}{Section}{Sections}
\crefname{subsection}{Section}{Sections}

\theoremstyle{plain}
\newtheorem{theorem}{Theorem}[section]
\newtheorem{corollary}[theorem]{Corollary}

\theoremstyle{remark}
\newtheorem{remark}[theorem]{Remark}

\newcommand{\bk}{\bm{k}}
\newcommand{\bq}{\bm{q}}
\newcommand{\br}{\bm{r}}
\newcommand{\ba}[1]{\bm{a}_{#1}}

\newcommand{\voigtorder}{[11,22,33,23,13,12]}
\newcommand{\engshear}{engineering shear convention \((\gamma_{ij}=2\varepsilon_{ij})\)}

\title{\bfseries An Efficient Parity-Blocked Method for Band-Structure Computation of 3D Anisotropic Phononic Crystals}

\author{%
\parbox{0.96\textwidth}{\centering
Jingkai Zhang\textsuperscript{1}\quad
Xing-Long Lyu\textsuperscript{2,*}\quad
Tiexiang Li\textsuperscript{1,3}\quad
Wen-Wei Lin\textsuperscript{3,4}\\[0.55em]
\footnotesize
\textsuperscript{1}School of Mathematics and Shing-Tung Yau Center, Southeast University,\\
Nanjing 211189, People's Republic of China\\
\textsuperscript{2}School of Mathematical Sciences, Nanjing Normal University, Nanjing 210023, People's Republic of China\\
\textsuperscript{3}Shanghai Institute for Mathematics and Interdisciplinary Sciences (SIMIS),\\
Shanghai 200433, People's Republic of China\\
\textsuperscript{4}Department of Applied Mathematics, National Yang Ming Chiao Tung University, Hsinchu 300, Taiwan\\[0.55em]
\texttt{jkzhangmath@seu.edu.cn} (J. Zhang); \texttt{xllyu@njnu.edu.cn} (X.-L. Lyu)\\
\texttt{txli@seu.edu.cn} (T. Li); \texttt{wwlin@outlook.com} (W.-W. Lin)\\[0.25em]
\textsuperscript{*}Corresponding author
}}

\date{}

\begin{document}
\maketitle

\begin{abstract}
Band-structure calculations for three-dimensional anisotropic phononic crystals require the repeated solution of large elastic generalized eigenvalue problems along Bloch paths. In standard staggered-grid discretizations, anisotropic coupling may involve derivative components located at incompatible grid positions, so additional interpolation or averaging closures are often introduced. This paper proposes a parity-blocked rotated staggered discretization based on four Bloch-periodic body-diagonal differences. The directional derivatives are reconstructed from these diagonal differences, leading to a Hermitian $B_hC_hB_h^H$ generalized eigenvalue formulation that incorporates anisotropic derivative coupling without separate interpolation closures. On even grids, when the stiffness and mass matrices are nodewise local multiplication matrices, the body-diagonal shifts preserve two independent parity invariants. The discrete velocity space is then decomposed exactly into four mutually independent block subspaces, and the full discrete spectrum can be recovered by solving the four smaller eigenvalue problems and merging their spectra. The full and block formulations are further organized in a unified Fourier SVD framework, which supports $\Gamma$-point zero-mode treatment, shift-invert Krylov iteration, inner PCG solves, and GPU matrix-vector products. Numerical experiments for a three-dimensional two-phase anisotropic phononic crystal show that the block implementation preserves the full-space spectrum while substantially reducing the wall-clock time. The results demonstrate that the proposed method provides a structured and efficient solver for large-scale band-structure computations of three-dimensional anisotropic phononic crystals.
\end{abstract}

\noindent\textbf{Keywords:} 3D anisotropic phononic crystals; rotated staggered discretization; parity blocking; SVD

\bigskip

\section{Introduction}
\label{sec1}

Phononic crystals regulate the propagation of elastic waves through periodic modulation of material parameters. Their band gaps, guided modes, localization effects, and directional wave propagation have become central topics in wave functional materials. Since the early work of Sigalas and Economou and of Kushwaha et al. on band structures of periodic elastic composites, the theory, numerical simulation, and structural design of phononic crystals have developed rapidly \cite{SigalasEconomou1992,Kushwaha1993,Hussein2014}. In three dimensions, band structure calculation amounts to solving, for each Bloch wave vector \(\bk\), an elastic eigenvalue problem on a periodic unit cell subject to Bloch periodic boundary conditions, and then tracking the low-frequency spectral branches along a prescribed path in the first Brillouin zone.

Existing approaches include plane wave expansion, finite differences, finite elements, time-domain methods, and complex Bloch wave formulations \cite{SigalasEconomou1992,Kushwaha1993,Laude2009,LyuTianLiLin2024JSC}. When material anisotropy, piezoelectric coupling, or multiphysical coupling is significant, elastic stiffness couplings may strongly affect dispersion relations, modal patterns, and band gap formation \cite{Wu2004PRB,WuHsuHuang2005}. Finite element frameworks for anisotropic, piezoelectric, and multicomponent elastic phononic crystals have also been developed in recent years \cite{Wang2021APM,Yan2025JCP}. For 3D phononic crystals with both general anisotropy and spatially heterogeneous material distributions, however, the computational difficulty is not only the number of degrees of freedom, but also the compatibility among Bloch periodicity, anisotropic constitutive coupling, and the layout of discrete variable spaces.

A typical difficulty is the lack of collocation in standard staggered discretizations. In the Voigt representation of the linear elastic equations, a general anisotropic stiffness matrix may contain many nonzero off-diagonal coupling entries. A stress component may depend not only on the normal strains \(\varepsilon_{11},\varepsilon_{22},\varepsilon_{33}\), but also on the shear strains \(\gamma_{23},\gamma_{13},\gamma_{12}\). In a standard velocity-stress staggered grid, different velocity components, stress components, and their derivatives are placed at different spatial locations. Hence, some derivative terms required to construct a target stress component do not lie at the same collocation points as that stress component. Although interpolation or averaging closures can make the scheme computable, they introduce additional discretization choices and weaken the structural consistency of the discrete operator.

Related issues have been studied systematically in finite difference simulations of elastic waves. The velocity-stress staggered grid schemes of Virieux \cite{Virieux1984,Virieux1986} form a classical framework for elastic wave propagation. The rotated staggered grid methods developed by Saenger and coauthors \cite{Saenger2000,Saenger2004,BansalSen2008} provide an important strategy for variable placement in complex and anisotropic elastic media. Subsequent studies have shown that grid layout, derivative reconstruction, and finite difference symbols influence numerical dispersion, stability, and long-time propagation accuracy in anisotropic elastic wave simulations \cite{Bernth2011,Yang2015,Gao2017}. Their combination with absorbing boundary conditions has also been investigated \cite{Chen2006}. In addition, collocated discretizations and Lebedev-type highly symmetric staggered schemes can be used for 3D anisotropic elastic wave problems \cite{Zhang2012,Lisitsa2010}; nevertheless, when these methods are applied to Bloch periodic eigenvalue problems, the consistency among numerical modes, physical modes, and discrete symmetries still requires careful treatment \cite{Koene2021}.

Unlike the above time-domain propagation schemes, this work focuses on frequency-domain Bloch periodic eigenvalue problems. The central question is how to construct a structured discrete operator that is simultaneously compatible with general anisotropic constitutive coupling, Bloch phase periodicity, and large-scale spectral computation. To this end, we first construct phase-shifted periodic differences along four body-diagonal directions and then reconstruct the three lattice coordinate derivatives by linear combinations. The resulting operator has the Hermitian generalized eigenvalue form \(B_hC_hB_h^H v_h=\omega^2M_hv_h\), so that the divergence and strain adjoint relation is preserved after discretization. This construction incorporates the derivative couplings of a general Voigt stiffness matrix into one operator framework and avoids closing the anisotropic stress terms by separate interpolation formulas. The body-diagonal shifts have an additional algebraic consequence on even grids: they preserve two independent parity labels. The discrete velocity space can therefore be decomposed into four mutually uncoupled invariant subspaces, and the full discrete spectrum can be recovered by solving the four block problems independently and merging their eigenvalues.

The main contributions of this work are threefold. First, for 3D Bloch periodic elastic eigenvalue problems with general anisotropy, we construct a rotated staggered derivative reconstruction based on four body-diagonal Bloch shifts and obtain a unified \(B_hC_hB_h^H\) discrete structure. Second, we prove the even-grid parity-block invariance, state the assumptions under which it is exact, and derive the corresponding reduced generalized eigenvalue problems. Third, we formulate the full-space and parity-block spaces in a common Fourier SVD representation and combine this representation with \(\Gamma\)-point nullspace treatment, shift-invert Lanczos iteration, inner PCG solves, and GPU matrix-vector products. In this formulation, Fourier SVD is used as the organizing and acceleration framework for the derivative part, whereas the new algebraic reduction comes from the parity invariance induced by the body-diagonal differences.

The remainder of this paper is organized as follows. Section~\ref{sec:form} formulates the elastic eigenvalue problem. Section~\ref{sec:blochdisc} constructs the Bloch periodic rotated staggered discretization. Section~\ref{sec:block} proves the parity blocking on even grids. Section~\ref{sec:block_wsvd} develops the Fourier SVD block reduction. Section~\ref{sec:impl} describes the discrete spectral solver, including the Lanczos iteration, matrix-vector products, and \(\Gamma\)-point deflation. Section~\ref{sec:num} presents the numerical experiments. Section~\ref{sec:conclusion} concludes the paper.

\section{Elastic Eigenvalue Formulation}
\label{sec:form}

We consider the frequency-domain linear elastic wave equation in a 3D anisotropic phononic crystal,
\begin{equation}\label{eq:elastic_freq_main}
-\omega^2\rho(\mathbf x)\,\bm u(\mathbf x)=\nabla\cdot\bm\tau(\mathbf x),
\end{equation}
where \(\mathbf x\) denotes the spatial position, \(\omega\) is the angular frequency, \(\rho(\mathbf x)\) is the density, \(\bm u(\mathbf x)\) is the displacement field, and \(\bm\tau(\mathbf x)\) is the Cauchy stress tensor. Under the small strain assumption,
\begin{equation}\label{eq:strain_def_main}
\bm\varepsilon(\bm u)=\frac12\bigl(\nabla\bm u+(\nabla\bm u)^\top\bigr),
\end{equation}
and the constitutive relation is
\begin{equation}\label{eq:constitutive_4th_main}
\tau_{ij}=C_{ijkl}\,\varepsilon_{kl}.
\end{equation}
Since both stress and strain are symmetric tensors, the fourth order elastic tensor can be represented by a symmetric \(6\times6\) matrix in Voigt notation. The Voigt components used in this paper are always formed in a fixed orthonormal Cartesian frame \((x_1,x_2,x_3)\). We use the Voigt ordering \voigtorder\ and the \engshear. Define
\begin{equation}\label{eq:tau_voigt_main}
\widetilde{\bm\tau}:=
[\tau_{x_1x_1},\tau_{x_2x_2},\tau_{x_3x_3},
\tau_{x_2x_3},\tau_{x_1x_3},\tau_{x_1x_2}]^\top,
\end{equation}
and
\begin{equation}\label{eq:eps_voigt_main}
\widetilde{\bm\varepsilon}(\bm u):=
[\varepsilon_{x_1x_1},\varepsilon_{x_2x_2},\varepsilon_{x_3x_3},
2\varepsilon_{x_2x_3},2\varepsilon_{x_1x_3},2\varepsilon_{x_1x_2}]^\top.
\end{equation}
The constitutive law is then written as
\begin{equation}\label{eq:constitutive_voigt_main}
\widetilde{\bm\tau}(\mathbf x)=\bm C(\mathbf x)\,\widetilde{\bm\varepsilon}(\bm u(\mathbf x)),
\qquad
\bm C(\mathbf x)\in\mathbb R^{6\times6}.
\end{equation}

When a nonorthogonal primitive lattice basis is used for grid indexing, we distinguish the reduced lattice coordinate \(\br=(r_1,r_2,r_3)^\top\) from the physical Cartesian coordinate \(\mathbf x\). If
\begin{equation}\label{eq:coordinate_map_main}
\mathbf x=A_{\rm lat}\br,
\qquad
A_{\rm lat}:=[\ba{1},\ba{2},\ba{3}],
\end{equation}
then the physical gradient is obtained from the reduced coordinate gradient by
\begin{equation}\label{eq:gradient_transform_main}
\nabla_{\mathbf x}=A_{\rm lat}^{-T}\nabla_{\br}.
\end{equation}
Thus the stiffness matrix \(\bm C\) and the Voigt strain vector are represented in the Cartesian frame, whereas the Bloch shifts and FFT indexing are naturally described in reduced lattice coordinates. In the discrete operator, the reconstructed reduced coordinate derivatives are transformed by \eqref{eq:gradient_transform_main} before they enter the Cartesian strain and divergence matrices. This convention prevents the nonorthogonality of the FCC primitive cell from being confused with the Cartesian Voigt convention.

This paper focuses on general anisotropy. In this case, off-diagonal coupling entries in the Voigt stiffness matrix, such as \(C_{14},C_{15},C_{16}\), are generally nonzero. These entries make a stress component depend simultaneously on several normal and shear strain components. Consequently, in a standard staggered-grid layout, several derivative terms required for a target stress component may not lie at the same collocation points as that stress component. This lack of collocation is the main motivation for introducing the rotated staggered derivative reconstruction.

For the subsequent discretization and eigenvalue solution, we introduce the velocity type variable
\begin{equation}\label{eq:v_def_main}
\bm v:=\mathrm i\omega\,\bm u.
\end{equation}
The momentum equation becomes
\begin{equation}\label{eq:momentum_v_main}
\mathrm i\omega\,\rho(\mathbf x)\,\bm v(\mathbf x)=\nabla\cdot\bm\tau(\mathbf x).
\end{equation}
To obtain a Hermitian spectral structure, define the divergence type differential operator
\begin{equation}\label{eq:B_op_main}
B=
\begin{bmatrix}
-\mathrm{i}\partial_{x_1}&0&0&0&-\mathrm{i}\partial_{x_3}&-\mathrm{i}\partial_{x_2}\\
0&-\mathrm{i}\partial_{x_2}&0&-\mathrm{i}\partial_{x_3}&0&-\mathrm{i}\partial_{x_1}\\
0&0&-\mathrm{i}\partial_{x_3}&-\mathrm{i}\partial_{x_2}&-\mathrm{i}\partial_{x_1}&0
\end{bmatrix}.
\end{equation}
In our notation, \(B\) is the divergence type operator mapping Voigt stress variables to the momentum equation, \(\partial_{x_i}\) denotes the Bloch periodic physical derivative in the Cartesian direction \(x_i\), and \(B^H\) maps velocity variables to Voigt strain variables. The superscript \(H\) denotes the Hermitian transpose. The operator \(BCB^H\) therefore preserves the divergence and strain adjoint structure at the discrete level. With this convention, the frequency-domain elastic system can be written as the block eigenvalue problem
\begin{equation}\label{eq:first_order_eig_block_main}
\left[
\begin{array}{cc}
\mathbf 0 & \rho(\mathbf x)^{-1} B\\
\bm C(\mathbf x) B^H & \mathbf 0
\end{array}
\right]
\left[
\begin{array}{c}
\bm v(\mathbf x)\\
\widetilde{\bm\tau}(\mathbf x)
\end{array}
\right]
=
\omega
\left[
\begin{array}{c}
\bm v(\mathbf x)\\
\widetilde{\bm\tau}(\mathbf x)
\end{array}
\right].
\end{equation}
Eliminating the stress variable gives the continuous generalized eigenvalue model used in this paper,
\begin{equation}\label{eq:LEEP_main}
\bigl(B\,\bm C(\mathbf x)\,B^H\bigr)\bm v(\mathbf x)
=
\omega^2\rho(\mathbf x)\bm v(\mathbf x).
\end{equation}
A similar velocity-stress elimination form appears in fast solvers for 3D phononic crystal linear elastic eigenvalue problems \cite{LyuTianLiLin2024JSC}. Building on this \(BCB^H\) spectral structure, the present work develops a body-diagonal Bloch difference reconstruction for general anisotropic constitutive laws. Thus \(B\,\bm C(\mathbf x)\,B^H\) is the elastic stiffness operator acting on velocity type unknowns: \(B^H\) first maps the velocity field to Voigt strains, \(\bm C(\mathbf x)\) applies the local anisotropic constitutive law, and \(B\) then gives the corresponding divergence type momentum balance. The Bloch periodic discretization, rotated staggered derivative reconstruction, and block decomposition developed below are all based on this generalized eigenvalue framework. The componentwise correspondence between \eqref{eq:first_order_eig_block_main} and \eqref{eq:LEEP_main} is given in Appendix~\ref{app:block_verify}.

\section{Bloch Periodic Rotated Staggered Discretization}
\label{sec:blochdisc}

\subsection{Bloch Shifts on the Unit Cell}

For band structure calculations of periodic media, the role of the Bloch condition is to reduce the wave problem on the infinite periodic medium to an eigenvalue problem on a unit cell, while encoding information across cell boundaries by phase factors determined by the wave vector. Let \(\ba{1},\ba{2},\ba{3}\) be the primitive lattice vectors and let \(A_{\rm lat}=[\ba{1},\ba{2},\ba{3}]\). We use \(\bq\) for the physical Bloch wave vector and \(\bk=(\kappa_1,\kappa_2,\kappa_3)^\top\) for its reduced reciprocal coordinate, defined by
\begin{equation}\label{eq:reduced_bloch_coordinate}
\bq\cdot\ba{\ell}=2\pi\kappa_\ell,
\qquad \ell=1,2,3.
\end{equation}
Therefore the phase gained across the \(\ell\)-th primitive translation is \(\exp(\mathrm i2\pi\kappa_\ell)\). For a given reduced Bloch vector \(\bk\), the velocity and stress fields satisfy
\begin{equation}\label{eq:bloch_main}
\bm v(\mathbf x+\ba{\ell})=e^{\mathrm i\,2\pi\kappa_\ell}\,\bm v(\mathbf x),\qquad
\widetilde{\bm\tau}(\mathbf x+\ba{\ell})=e^{\mathrm i\,2\pi\kappa_\ell}\,\widetilde{\bm\tau}(\mathbf x),
\qquad \ell=1,2,3.
\end{equation}
At the discrete level, Bloch periodicity is therefore not a purely periodic boundary condition, but a periodic shift accompanied by a phase correction. A key step in the following construction is to express these phase-shifted periodic translations in an algebraic form suitable for matrix operations.

Let \(n_1,n_2,n_3\) be the numbers of grid points in the three lattice directions, and let \(n=n_1n_2n_3\) be the total number of grid points in a unit cell. Define
\begin{equation}\label{eq:phi_def_main}
\phi_\ell:=2\pi\kappa_\ell,\qquad \ell=1,2,3.
\end{equation}
The Bloch phase acquired when crossing the boundary in the \(\ell\)-th lattice direction is \(e^{\mathrm i\phi_\ell}\). The one-dimensional phase-shifted periodic shift is represented by
\begin{equation}\label{eq:K_def_main}
K_\ell=
\begin{pmatrix}
0 & I_{n_\ell-1}\\
e^{\mathrm i\phi_\ell} & 0
\end{pmatrix},
\qquad \ell=1,2,3.
\end{equation}
This matrix performs a standard forward shift inside the unit cell and multiplies by the Bloch phase factor when the index crosses the cell boundary.

Using tensor products, the one-dimensional shifts are extended to 3D shift operators on the unit cell:
\begin{equation}\label{eq:H_def_main}
H_1:=I_{n_3}\otimes I_{n_2}\otimes K_1,\qquad
H_2:=I_{n_3}\otimes K_2\otimes I_{n_1},\qquad
H_3:=K_3\otimes I_{n_2}\otimes I_{n_1}.
\end{equation}
Thus the Bloch periodic boundary condition is encoded in the three coordinate direction discrete shifts. The body-diagonal differences and rotated staggered derivative reconstruction below are built on these phase-shifted shift operators. Since \(H_1,H_2,H_3\) commute pairwise, they can be simultaneously diagonalized by a common unitary basis; the corresponding spectral decomposition, used for fast operator actions, is discussed in Appendix~\ref{app:K_spectrum}.

\subsection{Rotated Derivative Reconstruction}

For generally anisotropic materials, the Voigt constitutive matrix may contain many nonzero coupling entries. For example, the stress component \(\tau_{x_1x_1}\) may depend not only on the axial derivatives \(v_{x_1,x_1},v_{x_2,x_2},v_{x_3,x_3}\), but also on mixed derivative combinations such as \(v_{x_2,x_3}+v_{x_3,x_2}\), \(v_{x_1,x_3}+v_{x_3,x_1}\), and \(v_{x_1,x_2}+v_{x_2,x_1}\). In a standard staggered grid discretization, these derivative terms are generally located at different spatial positions from the target stress component, and additional interpolation or averaging is often needed to close the scheme. For 3D generally anisotropic Bloch periodic eigenvalue problems, such lack of collocation not only complicates implementation but also weakens the algebraic structural consistency of the discrete operator.

To reduce the effect of this lack of collocation, we do not construct all first derivatives independently along the lattice coordinate axes. Instead, we first introduce Bloch periodic differences along four body-diagonal directions and then reconstruct the three lattice coordinate derivatives by linear combinations of these body-diagonal differences. Let \(h_1,h_2,h_3\) be the grid spacings in the three lattice coordinate directions, and set
\[
h_d:=\sqrt{h_1^2+h_2^2+h_3^2}.
\]
In the lattice coordinate basis \(\mathbf e_1,\mathbf e_2,\mathbf e_3\), define four body-diagonal directions by
\begin{equation}\label{eq:d_tilde_3d_main}
\begin{aligned}
\widetilde{\mathbf d}_1&=\frac{h_1}{h_d}\mathbf e_1+\frac{h_2}{h_d}\mathbf e_2+\frac{h_3}{h_d}\mathbf e_3,\\
\widetilde{\mathbf d}_2&=\frac{h_1}{h_d}\mathbf e_1+\frac{h_2}{h_d}\mathbf e_2-\frac{h_3}{h_d}\mathbf e_3,\\
\widetilde{\mathbf d}_3&=\frac{h_1}{h_d}\mathbf e_1-\frac{h_2}{h_d}\mathbf e_2+\frac{h_3}{h_d}\mathbf e_3,\\
\widetilde{\mathbf d}_4&=\frac{h_1}{h_d}\mathbf e_1-\frac{h_2}{h_d}\mathbf e_2-\frac{h_3}{h_d}\mathbf e_3.
\end{aligned}
\end{equation}
These four directions correspond to the four body-diagonals in the 3D rotated staggered grid. After constructing difference operators along these directions, the coordinate direction derivatives are reconstructed as
\begin{equation}\label{eq:3d_deriv_recon_main}
\begin{aligned}
\frac{\partial}{\partial r_1}\bm v
&=
c_1\left(D_{\widetilde d_1}+D_{\widetilde d_2}+D_{\widetilde d_3}+D_{\widetilde d_4}\right)\bm v,\\
\frac{\partial}{\partial r_2}\bm v
&=
c_2\left(D_{\widetilde d_1}+D_{\widetilde d_2}-D_{\widetilde d_3}-D_{\widetilde d_4}\right)\bm v,\\
\frac{\partial}{\partial r_3}\bm v
&=
c_3\left(D_{\widetilde d_1}-D_{\widetilde d_2}+D_{\widetilde d_3}-D_{\widetilde d_4}\right)\bm v,
\end{aligned}
\end{equation}
where the reconstruction coefficients are determined by the grid spacings,
\begin{equation}\label{eq:ci_recon_main}
c_1=\frac{h_d}{4h_1},\qquad
c_2=\frac{h_d}{4h_2},\qquad
c_3=\frac{h_d}{4h_3}.
\end{equation}

This reconstruction expresses several coupled derivative terms, which would otherwise be distributed over different locations in a standard staggered layout, through a unified combination of body-diagonal differences. It thereby reduces the reliance on additional interpolation in general anisotropic settings and provides the basis for assembling the subsequent divergence and strain adjoint structure. Figure~\ref{fig:rot_stag_3d_main} illustrates the geometry of the 3D rotated staggered grid.

\begin{figure}[htbp]
\centering
\includegraphics[width=0.72\textwidth]{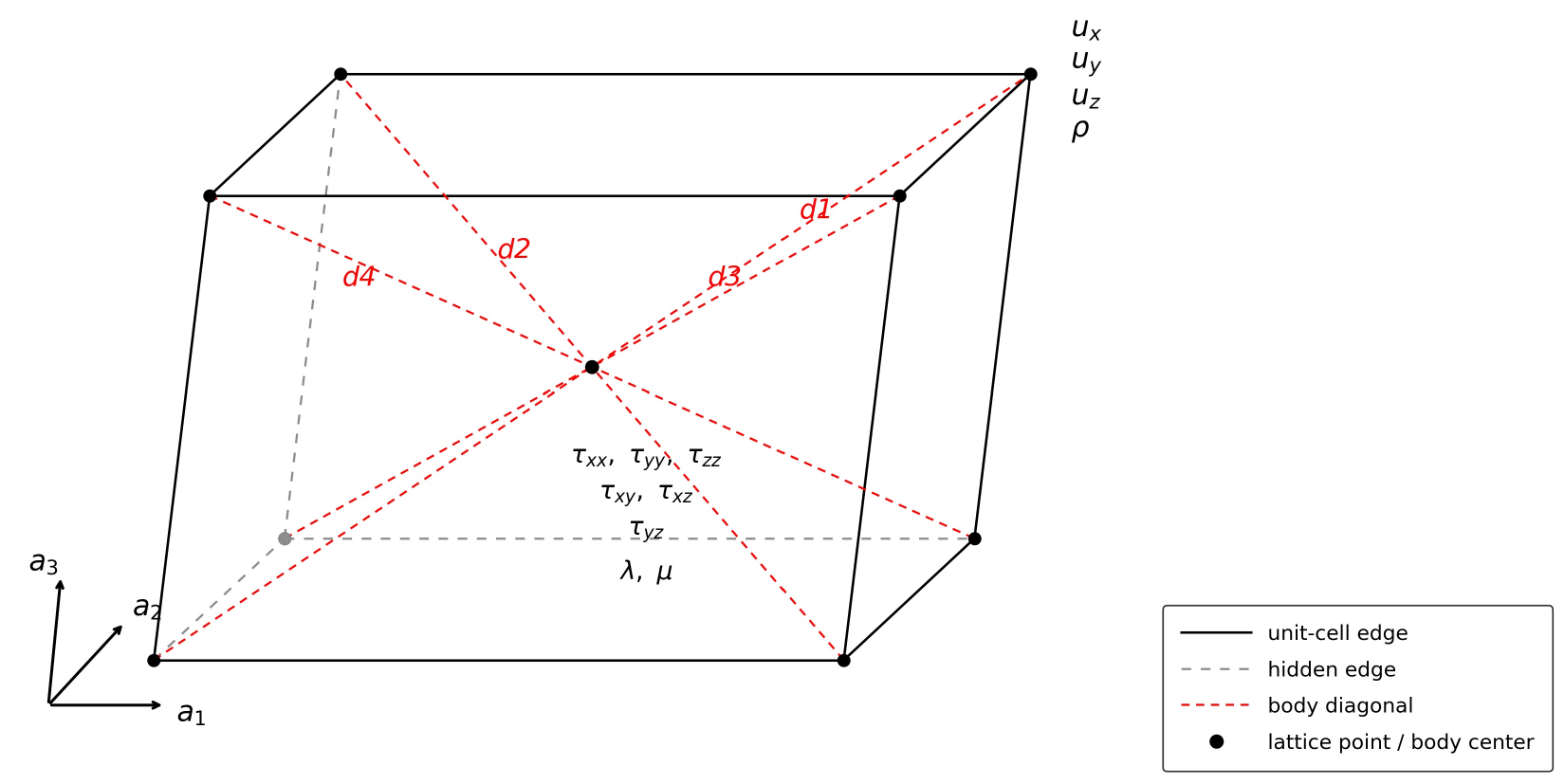}
\caption{Schematic illustration of the 3D rotated staggered grid. The solid black lines indicate the unit cell boundary, the dashed gray lines indicate hidden edges, the dashed red lines indicate the four body-diagonal difference directions, and the black dots denote grid nodes or cell center locations.}
\label{fig:rot_stag_3d_main}
\end{figure}

\subsection{Body Diagonal Differences and Assembly}

The previous subsection described the geometry of the rotated staggered derivative reconstruction. We now express it in an algebraic form suitable for matrix assembly. Since the Bloch periodic shifts in the three coordinate directions have been defined in \eqref{eq:H_def_main}, a one step translation along each body-diagonal direction can be written as a product of the basic shift operators:
\begin{equation}\label{eq:Sd_def_main}
S_{d_1}:=H_3H_2H_1,\qquad
S_{d_2}:=H_3^{-1}H_2H_1,\qquad
S_{d_3}:=H_3H_2^{-1}H_1,\qquad
S_{d_4}:=H_3^{-1}H_2^{-1}H_1.
\end{equation}
These four operators are the algebraic carriers of the four Bloch periodic body-diagonal shifts.

The forward difference along each body-diagonal direction is defined by
\begin{equation}\label{eq:Dd_def_main}
D_{d_\ell}:=\frac{S_{d_\ell}-I}{h_d},
\qquad
h_d:=\sqrt{h_1^2+h_2^2+h_3^2},
\qquad \ell=1,2,3,4,
\end{equation}
where \(h_d\) is the length of one body-diagonal step. The coordinate direction derivative reconstruction then becomes
\begin{equation}\label{eq:Dtilde_recon_main}
\begin{aligned}
\widetilde D_1&:=\frac{h_d}{4h_1}
\left(D_{d_1}+D_{d_2}+D_{d_3}+D_{d_4}\right),\\
\widetilde D_2&:=\frac{h_d}{4h_2}
\left(D_{d_1}+D_{d_2}-D_{d_3}-D_{d_4}\right),\\
\widetilde D_3&:=\frac{h_d}{4h_3}
\left(D_{d_1}-D_{d_2}+D_{d_3}-D_{d_4}\right).
\end{aligned}
\end{equation}
Thus the three coordinate direction derivatives are not constructed independently. They are obtained as linear combinations of four body-diagonal differences, which is the defining algebraic feature of the rotated staggered reconstruction.

The operators \(\widetilde D_1,\widetilde D_2,\widetilde D_3\) approximate the reduced coordinate derivatives in the grid coordinate \(\br\). To match the Cartesian operator \eqref{eq:B_op_main}, which involves \(-\mathrm i\partial_{x_m}\), first set
\begin{equation}\label{eq:Dhat_def_main}
\widehat{\mathcal D}_s:=-\mathrm i\,\widetilde D_s,
\qquad s=1,2,3.
\end{equation}
If the primitive lattice basis is nonorthogonal, the physical derivative matrices are then obtained from \eqref{eq:gradient_transform_main} as
\begin{equation}\label{eq:Dcal_cartesian_transform_main}
\mathcal D_m:=\sum_{s=1}^3 (A_{\rm lat}^{-1})_{s m}\widehat{\mathcal D}_s,
\qquad m=1,2,3.
\end{equation}
For an orthonormal computational cell, \(A_{\rm lat}=I\) and therefore \(\mathcal D_m=\widehat{\mathcal D}_m\). The discrete divergence type matrix is assembled as
\begin{equation}\label{eq:B_discrete_main}
B_h(\bk)=
\begin{bmatrix}
\mathcal D_1 & 0 & 0 & 0 & \mathcal D_3 & \mathcal D_2\\
0 & \mathcal D_2 & 0 & \mathcal D_3 & 0 & \mathcal D_1\\
0 & 0 & \mathcal D_3 & \mathcal D_2 & \mathcal D_1 & 0
\end{bmatrix}.
\end{equation}
The subscript \(h\) distinguishes the discrete matrix from the continuous operator in Section~\ref{sec:form}, and the dependence on \(\bk\) arises from the Bloch phases in the shift operators. Here \(B_h(\bk)\) maps Voigt stress variables to the discrete velocity equation, whereas \(B_h(\bk)^H\) maps the velocity unknowns to the discrete Voigt strain variables.

The continuous generalized eigenvalue problem \eqref{eq:LEEP_main} is finally discretized as
\begin{equation}\label{eq:discrete_GEP_main}
A_h(\bk)\mathbf v_h=\omega^2 M_h\mathbf v_h,
\qquad
A_h(\bk):=B_h(\bk)C_hB_h(\bk)^H .
\end{equation}
Here \(\mathbf v_h\in\mathbb C^{3n}\) is the discrete velocity unknown, \(C_h\in\mathbb C^{6n\times6n}\) is the discrete stiffness matrix, and \(M_h\in\mathbb C^{3n\times3n}\) is the discrete mass matrix. Equation~\eqref{eq:discrete_GEP_main} is the basic discrete model used for the block decomposition and numerical experiments.

\begin{remark}[Symbol consistency of the derivative reconstruction]
\label{rem:symbol_consistency}
The reconstructed derivative symbols are consistent with the reduced coordinate derivatives at low frequencies. Let \(\theta_s=h_s\xi_s\) be the phase associated with a smooth Fourier mode in the \(r_s\) direction and let \(h=\max_s h_s\), with comparable grid spacings. For example, the symbol of \(\widetilde D_1\) can be written as
\[
\delta_1(\theta)
=\frac{1}{4h_1}\sum_{\varepsilon_2,\varepsilon_3\in\{\pm1\}}
\left\{\exp\bigl(\mathrm i(\theta_1+\varepsilon_2\theta_2+\varepsilon_3\theta_3)\bigr)-1\right\}.
\]
A Taylor expansion gives
\[
\delta_1(\theta)=\mathrm i\xi_1+O(h|\xi|^2),
\qquad
\delta_2(\theta)=\mathrm i\xi_2+O(h|\xi|^2),
\qquad
\delta_3(\theta)=\mathrm i\xi_3+O(h|\xi|^2).
\]
Thus \(\widehat{\mathcal D}_s=-\mathrm i\widetilde D_s\) has the correct low-frequency principal symbol for \(-\mathrm i\partial_{r_s}\). The numerical \(O(h^2)\) reference slopes shown later are therefore used as empirical benchmarks for the assembled Hermitian stiffness operator and the selected grouped-band metric, rather than as a general a priori convergence theorem for heterogeneous anisotropic media.
\end{remark}

\section{Parity Blocking on Even Grids}
\label{sec:block}

This section proves the parity block structure induced by body-diagonal Bloch differences on even grids. For brevity, write
\[
A_h(\bk):=B_h(\bk)C_hB_h(\bk)^H,
\]
and continue to denote the discrete mass matrix by \(M_h\). The discrete generalized eigenvalue problem is
\[
A_h(\bk)v_h=\omega^2M_hv_h,
\qquad v_h\in\mathbb C^{3n}.
\]

From the phase viewpoint, let
\[
\phi_\ell=2\pi\kappa_\ell=\bq\cdot\ba{\ell},\qquad \ell=1,2,3,
\]
and define the one-dimensional Bloch Fourier phases
\[
\alpha_p=\frac{\phi_1+2\pi p}{n_1},\qquad
\beta_q=\frac{\phi_2+2\pi q}{n_2},\qquad
\gamma_r=\frac{\phi_3+2\pi r}{n_3}.
\]
The frequency-domain phase factors of the four body-diagonal shifts \(S_{d_1},S_{d_2},S_{d_3},S_{d_4}\) are
\begin{equation}\label{eq:Sd_phase_maintext}
e^{\mathrm i(\alpha_p+\beta_q+\gamma_r)},\quad
e^{\mathrm i(\alpha_p+\beta_q-\gamma_r)},\quad
e^{\mathrm i(\alpha_p-\beta_q+\gamma_r)},\quad
e^{\mathrm i(\alpha_p-\beta_q-\gamma_r)}.
\end{equation}
When \(n_1,n_2,n_3\) are all even, a half-period shift in two Fourier indices, for example
\[
(p,q,r)\mapsto\left(p+\frac{n_1}{2},q+\frac{n_2}{2},r\right),
\]
adds \(\pi\) to the corresponding two phases and therefore leaves all four phase factors in \eqref{eq:Sd_phase_maintext} unchanged. Similarly,
\[
(p,q,r)\mapsto\left(p+\frac{n_1}{2},q,r+\frac{n_3}{2}\right),
\qquad
(p,q,r)\mapsto\left(p,q+\frac{n_2}{2},r+\frac{n_3}{2}\right)
\]
also preserve the four body-diagonal phase factors. Thus body-diagonal differences on even grids carry two independent half-period invariances. We next express this structure algebraically on the physical grid.

Let grid points in the unit cell be indexed by integer triples \(x=(i,j,k)\). On the scalar grid function space \(\mathbb C^n\), define
\begin{equation}\label{eq:Gamma12_13_sign_main}
g_{12}(x)=(-1)^{i+j},
\qquad
g_{13}(x)=(-1)^{i+k},
\end{equation}
and the corresponding diagonal operators
\begin{equation}\label{eq:Gamma12_13_op_main}
(\Gamma_{12}w)(x)=g_{12}(x)w(x),
\qquad
(\Gamma_{13}w)(x)=g_{13}(x)w(x).
\end{equation}
They satisfy
\begin{equation}\label{eq:Gamma_basic_main}
\Gamma_{12}^*=\Gamma_{12},\qquad
\Gamma_{13}^*=\Gamma_{13},\qquad
\Gamma_{12}^2=\Gamma_{13}^2=I,
\qquad
\Gamma_{12}\Gamma_{13}=\Gamma_{13}\Gamma_{12}.
\end{equation}
Lift these operators to the velocity space and the Voigt strain/stress space by
\begin{equation}\label{eq:Gamma_lift_main}
\Gamma_a^{(v)}:=I_3\otimes\Gamma_a,
\qquad
\Gamma_a^{(e)}:=I_6\otimes\Gamma_a,
\qquad a\in\{12,13\}.
\end{equation}

\begin{theorem}[Block invariance on even grids]\label{thm:block_invariance}
Assume that \(n_1,n_2,n_3\) are even and that the stiffness matrix \(C_h\) and the mass matrix \(M_h\) are nodewise local multiplication matrices. Then the discrete generalized eigenvalue pair \((A_h(\bk),M_h)\) is compatible with the lifted parity operators:
\begin{equation}\label{eq:A_M_commute_main}
A_h(\bk)\Gamma_a^{(v)}=\Gamma_a^{(v)}A_h(\bk),
\qquad
M_h\Gamma_a^{(v)}=\Gamma_a^{(v)}M_h,
\qquad a\in\{12,13\}.
\end{equation}
Consequently, the velocity space admits an orthogonal decomposition into four common eigenspaces of \((\Gamma_{12}^{(v)},\Gamma_{13}^{(v)})\), and both \(A_h(\bk)\) and \(M_h\) leave each subspace invariant.
\end{theorem}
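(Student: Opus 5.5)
The plan is to reduce everything to commutation relations at the scalar level and then lift them. First, the key scalar fact: each body-diagonal shift $S_{d_\ell}$ commutes with $\Gamma_{12}$ and $\Gamma_{13}$. Write $H_1,H_2,H_3$ as index shifts $(i,j,k)\mapsto(i+1,j,k)$ and so on. Each carries Bloch phase factors that are diagonal, i.e.\ pure multiplications. Conjugating a coordinate shift by a sign multiplier then gives
\[
\Gamma_{12}H_1\Gamma_{12}=-H_1,\qquad \Gamma_{12}H_2\Gamma_{12}=-H_2,\qquad \Gamma_{12}H_3\Gamma_{12}=H_3 ,
\]
and analogously $\Gamma_{13}H_1\Gamma_{13}=-H_1$, $\Gamma_{13}H_2\Gamma_{13}=H_2$, $\Gamma_{13}H_3\Gamma_{13}=-H_3$.

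This is exactly where evenness of $n_1,n_2,n_3$ enters. The wraparound $i=n_1-1\mapsto 0$ changes $(-1)^i$ by $(-1)^{n_1}=+1$ times the shift sign. So the relation $g(x+e_1)=-g(x)$ holds uniformly, including across the cell boundary. The Bloch phase $e^{\mathrm i\phi_\ell}$ is a scalar on the wrapped entries and commutes with the diagonal signs.

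The same relations hold for $H_\ell^{-1}$. Since every $S_{d_\ell}=H_3^{\pm1}H_2^{\pm1}H_1$ contains exactly one factor from each direction, its conjugate by $\Gamma_{12}$ picks up $(-1)(-1)=1$, and its conjugate by $\Gamma_{13}$ picks up $(-1)(-1)=1$. Hence $\Gamma_a S_{d_\ell}=S_{d_\ell}\Gamma_a$ for $a\in\{12,13\}$.

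The next step propagates this commutation through the assembly. The operators $D_{d_\ell}$, $\widetilde D_s$, $\widehat{\mathcal D}_s$ and $\mathcal D_m$ are linear combinations of $I$ and the $S_{d_\ell}$, with scalar coefficients (including the entries of $A_{\rm lat}^{-1}$), so all of them commute with $\Gamma_a$. Taking adjoints and using $\Gamma_a^*=\Gamma_a$, the adjoints $\mathcal D_m^H$ also commute with $\Gamma_a$. Because $B_h(\bk)$ is a $3\times6$ block matrix whose blocks are $\mathcal D_m$ or $0$, the Kronecker lifts give
\[
\Gamma_a^{(v)}B_h=B_h\Gamma_a^{(e)},\qquad \Gamma_a^{(e)}B_h^H=B_h^H\Gamma_a^{(v)} .
\]

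The local-multiplication hypothesis is used next. $C_h$ has the form $\sum_{p,q}E_{pq}\otimes\operatorname{diag}(c_{pq})$ with each $c_{pq}$ a grid function. The block $\operatorname{diag}(c_{pq})$ commutes with the diagonal $\Gamma_a$, so $C_h$ commutes with $\Gamma_a^{(e)}=I_6\otimes\Gamma_a$; likewise $M_h$ commutes with $\Gamma_a^{(v)}$. Chaining these relations gives $\Gamma_a^{(v)}A_h=B_h\Gamma_a^{(e)}C_hB_h^H=B_hC_h\Gamma_a^{(e)}B_h^H=A_h\Gamma_a^{(v)}$.

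For the decomposition, note that $\Gamma_{12}^{(v)}$ and $\Gamma_{13}^{(v)}$ are commuting Hermitian involutions, and they are diagonal with entries $\pm1$. The four joint sign classes $(g_{12},g_{13})\in\{\pm1\}^2$ are each nonempty; for instance $(i,j,k)=(0,0,0),(1,0,0),(0,1,0),(0,0,1)$ realize all four. These classes partition the grid nodes, so the corresponding coordinate subspaces (tensored with $\mathbb C^3$) form an orthogonal direct sum of joint eigenspaces. Their orthogonal projectors are $\tfrac14(I\pm\Gamma_{12}^{(v)})(I\pm\Gamma_{13}^{(v)})$, which are polynomials in the $\Gamma$'s. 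They therefore commute with $A_h$ and $M_h$, which gives invariance of each subspace.

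The main obstacle I expect is the boundary wraparound step: one must check carefully that the phase-carrying corner entry of $K_\ell$ does not spoil the sign-flip identity, and this is precisely where the evenness assumption is indispensable. Everything after that is formal bookkeeping.
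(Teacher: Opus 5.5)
Your proof is correct and follows essentially the same route as the paper: the body-diagonal shifts commute with the parity multipliers, this commutation passes through the linear combinations and the assembly of $B_h$, and the nodewise-local $C_h$ and $M_h$ also commute with the parity operators. The one difference is that you check the shift commutation factor by factor, via $\Gamma_a H_\ell \Gamma_a=\pm H_\ell$, whereas the paper applies the step-parity identity $\Gamma_{12}S_d=(-1)^{d_1+d_2}S_d\Gamma_{12}$ directly; your version makes explicit where evenness of $n_\ell$ enters at the Bloch wraparound entry, which the paper's proof leaves implicit.
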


\begin{proof}
Let \(S_d\) be the Bloch shift with step \(d=(d_1,d_2,d_3)\in\mathbb Z^3\). Since
\[
g_{12}(x+d)=g_{12}(x)(-1)^{d_1+d_2},\qquad
g_{13}(x+d)=g_{13}(x)(-1)^{d_1+d_3},
\]
we have
\begin{equation}\label{eq:Gamma_shift_comm_maintext}
\Gamma_{12}S_d=(-1)^{d_1+d_2}S_d\Gamma_{12},
\qquad
\Gamma_{13}S_d=(-1)^{d_1+d_3}S_d\Gamma_{13}.
\end{equation}
The four body-diagonal shifts used in this paper all have steps \(d=(1,\pm1,\pm1)\), so \(d_1+d_2\) and \(d_1+d_3\) are even. Hence
\[
\Gamma_a S_{d_\ell}=S_{d_\ell}\Gamma_a,
\qquad a\in\{12,13\},\quad \ell=1,2,3,4.
\]
Since \(D_{d_\ell}\) is linear in \(S_{d_\ell}\), and since each reconstructed derivative \(\widetilde D_m\) is a linear combination of \(D_{d_\ell}\), it follows that
\begin{equation}\label{eq:Gamma_Dtilde_comm_main}
\Gamma_a\widetilde D_m=\widetilde D_m\Gamma_a,
\qquad a\in\{12,13\},\quad m=1,2,3.
\end{equation}
The assembly of \(B_h(\bk)\) gives
\begin{equation}\label{eq:B_Gamma_lift_comm_main}
\Gamma_a^{(v)}B_h(\bk)=B_h(\bk)\Gamma_a^{(e)},
\qquad
B_h(\bk)^H\Gamma_a^{(v)}=\Gamma_a^{(e)}B_h(\bk)^H.
\end{equation}
On the other hand, \(C_h\) acts as a local \(6\times6\) Voigt stiffness block at each grid point, and \(M_h\) acts as a local mass block. Neither changes the parity label of a node. Thus
\begin{equation}\label{eq:C_M_Gamma_comm_main}
\Gamma_a^{(e)}C_h=C_h\Gamma_a^{(e)},
\qquad
\Gamma_a^{(v)}M_h=M_h\Gamma_a^{(v)}.
\end{equation}
Combining \eqref{eq:B_Gamma_lift_comm_main} and \eqref{eq:C_M_Gamma_comm_main} yields \eqref{eq:A_M_commute_main}.
\end{proof}

By Theorem~\ref{thm:block_invariance}, for each \((\sigma_{12},\sigma_{13})\in\{\pm1\}^2\), define
\begin{equation}\label{eq:V_sigma_main}
\mathcal V_{\sigma_{12},\sigma_{13}}
=
\{v_h\in\mathbb C^{3n}:
\Gamma_{12}^{(v)}v_h=\sigma_{12}v_h,
\Gamma_{13}^{(v)}v_h=\sigma_{13}v_h\}.
\end{equation}
The corresponding orthogonal projector is
\begin{equation}\label{eq:Proj_sigma_main}
P_{\sigma_{12},\sigma_{13}}
=\frac14
(I+\sigma_{12}\Gamma_{12}^{(v)})
(I+\sigma_{13}\Gamma_{13}^{(v)}).
\end{equation}
Equation~\eqref{eq:Gamma_basic_main} implies that these projectors are mutually orthogonal and sum to the identity. Hence
\begin{equation}\label{eq:direct_sum_block_main}
\mathbb C^{3n}
=
\mathcal V_{+,+}\oplus
\mathcal V_{+,-}\oplus
\mathcal V_{-,+}\oplus
\mathcal V_{-,-}.
\end{equation}

\begin{corollary}[Reduced generalized eigenvalue problem on each block]\label{cor:block_reduced_gep}
Let \(Q_\sigma\) be an orthonormal basis matrix for \(\mathcal V_\sigma\), and define
\begin{equation}\label{eq:Asigma_main}
A_{\sigma,h}(\bk):=Q_\sigma^*A_h(\bk)Q_\sigma,
\qquad
M_{\sigma,h}:=Q_\sigma^*M_hQ_\sigma.
\end{equation}
Then the original problem restricted to the block \(\mathcal V_\sigma\) is equivalent to
\begin{equation}\label{eq:reduced_eig_main}
A_{\sigma,h}(\bk)y=\omega^2M_{\sigma,h}y,
\qquad v_{\sigma,h}=Q_\sigma y.
\end{equation}
\end{corollary}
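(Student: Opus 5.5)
The plan is to use the invariance from Theorem~\ref{thm:block_invariance} to show that restriction to \(\mathcal V_\sigma\) and the compression by \(Q_\sigma\) give the same eigenpairs. Everything follows from the facts that \(P_\sigma=Q_\sigma Q_\sigma^*\) is a polynomial in \(\Gamma_{12}^{(v)},\Gamma_{13}^{(v)}\) and therefore commutes with \(A_h(\bk)\) and \(M_h\), and that \(Q_\sigma^*Q_\sigma=I\).

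\emph{Step 1 (projector identities).} From \eqref{eq:Proj_sigma_main} and \eqref{eq:A_M_commute_main}, \(P_\sigma A_h=A_hP_\sigma\) and \(P_\sigma M_h=M_hP_\sigma\). Because \(Q_\sigma\) is an orthonormal basis of \(\operatorname{range}P_\sigma=\mathcal V_\sigma\), we have \(P_\sigma=Q_\sigma Q_\sigma^*\) and \(P_\sigma Q_\sigma=Q_\sigma\). Hence
\[
A_hQ_\sigma=A_hP_\sigma Q_\sigma=P_\sigma A_hQ_\sigma=Q_\sigma\bigl(Q_\sigma^*A_hQ_\sigma\bigr)=Q_\sigma A_{\sigma,h}(\bk),
\]
and in the same way \(M_hQ_\sigma=Q_\sigma M_{\sigma,h}\). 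This intertwining relation is the heart of the argument.

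\emph{Step 2 (forward direction).} Suppose \(A_{\sigma,h}y=\omega^2M_{\sigma,h}y\) and set \(v=Q_\sigma y\in\mathcal V_\sigma\). Then
\[
A_hv=Q_\sigma A_{\sigma,h}y=\omega^2Q_\sigma M_{\sigma,h}y=\omega^2M_hv.
\]
Moreover \(v\neq0\) whenever \(y\neq0\), since \(Q_\sigma\) has orthonormal columns.

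\emph{Step 3 (converse).} Suppose \(v\in\mathcal V_\sigma\) satisfies \(A_hv=\omega^2M_hv\). Write \(v=Q_\sigma y\) with \(y=Q_\sigma^*v\). Multiplying the eigen-equation on the left by \(Q_\sigma^*\) gives the reduced problem \eqref{eq:reduced_eig_main}. This correspondence is a bijection that preserves multiplicities, because \(y\mapsto Q_\sigma y\) is an isometry onto \(\mathcal V_\sigma\). The Hermitian property of \(A_{\sigma,h}\) follows from that of \(A_h=B_hC_hB_h^H\) when \(C_h\) is Hermitian. Positive definiteness of \(M_{\sigma,h}\) follows from that of \(M_h\), assuming positive densities.

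\emph{Main obstacle.} The only substantive point is Step~1, specifically using commutation with the projector rather than just invariance of the subspace. Invariance of \(\mathcal V_\sigma\) alone already gives \(A_hQ_\sigma=Q_\sigma X\) for some matrix \(X\), and multiplying by \(Q_\sigma^*\) identifies \(X=A_{\sigma,h}\). So the step is routine once Theorem~\ref{thm:block_invariance} is available. With the decomposition \eqref{eq:direct_sum_block_main}, the full spectrum is then the union of the four block spectra.
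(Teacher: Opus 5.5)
Your proposal is correct and follows the same route as the paper, which obtains the corollary directly from the invariance of $\mathcal V_\sigma$ under $A_h(\bk)$ and $M_h$ established in Theorem~\ref{thm:block_invariance}. Your intertwining identities $A_h(\bk)Q_\sigma=Q_\sigma A_{\sigma,h}(\bk)$ and $M_hQ_\sigma=Q_\sigma M_{\sigma,h}$, derived from the commuting projector $P_\sigma=Q_\sigma Q_\sigma^*$, make explicit the step that the paper leaves implicit.
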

The parity blocking therefore provides an exact algebraic block decomposition: the original generalized eigenvalue problem can be solved on four invariant subspaces independently. The decomposition does not require the material parameters to be homogeneous within the unit cell, but it does require \(C_h\) and \(M_h\) to enter as nodewise local multiplication matrices. If the material discretization contains nonlocal cross-node averaging, filtering, or higher-order interface coupling terms, the commutation relations above must be reexamined. Whether the four blocks have additional spectral equivalence or unitary similarity depends on the material distribution and geometric symmetry, and does not follow from the block structure alone.

\section{Fourier SVD Block Reduction}
\label{sec:block_wsvd}

The full-space discrete operator
\[
A_h(\bk)=B_h(\bk)C_hB_h(\bk)^H
\]
and the even grid invariant block decomposition have been established above. Fourier SVD variable transformations have been used for fast solvers of 3D phononic crystal linear elastic eigenvalue problems. Their main idea is to exploit the Fourier structure of Bloch periodic difference operators to separate the derivative part and to use local SVDs to improve the inner Krylov linear solves \cite{LyuTianLiLin2024JSC}.

The Fourier SVD structure acts primarily on the derivative operators. If the stiffness matrix \(C_h\) corresponds to a spatially heterogeneous material, then \(C_h\) is generally not diagonal by mode in either full-space Fourier coordinates or block Fourier coordinates; modal coupling remains. Therefore, the formulation below does not imply a complete decomposition of the heterogeneous material problem into independent single-mode problems. Instead, it structures the Bloch periodic derivative operator and provides a unified variable transformation and preconditioning framework for the shift-invert linear subproblems.

\subsection{Fourier Coordinates in the Full Space}

Let
\[
n=n_1n_2n_3,
\qquad
\mathbb Z_m:=\{0,1,\ldots,m-1\},
\qquad
\mathcal I:=\mathbb Z_{n_1}\times\mathbb Z_{n_2}\times\mathbb Z_{n_3}.
\]
Here \(\mathcal I\) denotes the full space Fourier frequency index set. We use \(I_m\) for the \(m\times m\) identity matrix and write \(I\) when the dimension is clear from context. By Section~\ref{sec:blochdisc}, the three Bloch periodic shift matrices \(H_1,H_2,H_3\) commute pairwise and can therefore be simultaneously diagonalized by a common 3D Bloch Fourier basis. Let
\begin{equation}\label{eq:T_full_def_unified}
T=X_3\otimes X_2\otimes X_1\in\mathbb C^{n\times n},
\end{equation}
where \(X_\ell\) is the unitary eigenvector matrix of the one-dimensional Bloch shift in the \(\ell\)-th direction. Then all body-diagonal shifts, body-diagonal differences, and reconstructed derivative operators built from \(H_1,H_2,H_3\) admit frequency mode representations under \(T\).

For the full-space formulation, the 3D Bloch DFT basis functions are
\begin{equation}\label{eq:psi_pqr_unified}
\psi_{pqr}(i,j,k)
=\frac1{\sqrt n}\exp\{\mathrm i(i\alpha_p+j\beta_q+k\gamma_r)\},
\end{equation}
where
\[
\alpha_p=\frac{\phi_1+2\pi p}{n_1},\qquad
\beta_q=\frac{\phi_2+2\pi q}{n_2},\qquad
\gamma_r=\frac{\phi_3+2\pi r}{n_3}.
\]
Here \((p,q,r)\in\mathcal I\). This complete frequency set is the parent space for the quotient-frequency sets used in the block representation.

\subsection{Fourier Coordinates in the Block Spaces}

Assume now that \(n_1,n_2,n_3\) are even. The parity multipliers in Section~\ref{sec:block} correspond to half-period shifts in the full-space Fourier indices. Specifically,
\[
(-1)^{i+j}\psi_{pqr}=\psi_{p+n_1/2,\,q+n_2/2,\,r},
\qquad
(-1)^{i+k}\psi_{pqr}=\psi_{p+n_1/2,\,q,\,r+n_3/2}.
\]
Define
\[
h_{12}=\left(\frac{n_1}{2},\frac{n_2}{2},0\right),\qquad
h_{13}=\left(\frac{n_1}{2},0,\frac{n_3}{2}\right),\qquad
h_{23}=h_{12}+h_{13}.
\]
For \(m\in\mathcal I\), the orbit generated by these half-period shifts is
\begin{equation}\label{eq:orbit_unified}
\mathcal O(m)=\{m,\ m+h_{12},\ m+h_{13},\ m+h_{23}\}.
\end{equation}
Each orbit contains four frequency points. Choose a representative frequency set \(\mathcal R\subset\mathcal I\) such that each orbit has exactly one representative. Then \(|\mathcal R|=n/4\).

For a fixed block sign \(\sigma=(\sigma_{12},\sigma_{13})\in\{\pm1\}^2\) and \(m\in\mathcal R\), define the folding vector
\begin{equation}\label{eq:f_sigma_unified}
f_m^{(\sigma)}
=\frac12\left(e_m+\sigma_{12}e_{m+h_{12}}
+\sigma_{13}e_{m+h_{13}}
+\sigma_{12}\sigma_{13}e_{m+h_{23}}\right).
\end{equation}
Let
\[
F_\sigma=(f_m^{(\sigma)})_{m\in\mathcal R}\in\mathbb C^{n\times(n/4)}.
\]
Since different orbits are disjoint and since the normalization in \eqref{eq:f_sigma_unified} is \(1/2\),
\[
F_\sigma^HF_\sigma=I_{n/4}.
\]
The Fourier basis for the \(\sigma\)-th block is
\begin{equation}\label{eq:Tsigma_unified}
T_\sigma:=TF_\sigma\in\mathbb C^{n\times(n/4)},
\qquad
T_\sigma^HT_\sigma=I_{n/4}.
\end{equation}
Thus the essential distinction between the full-space and a block space is not the SVD formula itself, but the reduction of Fourier degrees of freedom from the complete frequency set \(\mathcal I\) to the quotient representative set \(\mathcal R\).

\subsection{Symbol Blocks and Material Coupling}

To avoid separate derivations for the full and block spaces, introduce the unified index
\[
\xi\in \{\mathrm{full}\}\cup\{\sigma:\sigma=(\sigma_{12},\sigma_{13})\in\{\pm1\}^2\},
\]
where \(\xi=\mathrm{full}\) denotes the full space and \(\xi=\sigma\) denotes one of the four blocks. Define
\begin{equation}\label{eq:T_xi_I_xi_def}
T_\xi=
\begin{cases}
T, & \xi=\mathrm{full},\\
T_\sigma, & \xi=\sigma,
\end{cases}
\qquad
\mathcal I_\xi=
\begin{cases}
\mathcal I, & \xi=\mathrm{full},\\
\mathcal R, & \xi=\sigma.
\end{cases}
\end{equation}
Here \(\mathcal I_\xi\) is the Fourier frequency index set associated with the corresponding space. In this notation, the divergence type matrix associated with the derivative operator is written in the corresponding Fourier coordinates as
\begin{equation}\label{eq:Bhat_xi_def}
\widehat B_{\xi,h}
:=(I_3\otimes T_\xi)^HB_h(\bk)(I_6\otimes T_\xi)
=\bigoplus_{m\in\mathcal I_\xi}\widehat B_{\xi,h}(m).
\end{equation}
Let \(s_{d_\ell,\xi}(m)\) be the symbol of the body-diagonal shift \(S_{d_\ell}\) in the \(T_\xi\) coordinates, and define
\[
d_{\ell,\xi}(m):=\frac{s_{d_\ell,\xi}(m)-1}{h_d},
\qquad \ell=1,2,3,4.
\]
The symbols of the three reconstructed derivatives are
\begin{align}
\delta_{1,\xi}(m)&=c_1\{d_{1,\xi}(m)+d_{2,\xi}(m)+d_{3,\xi}(m)+d_{4,\xi}(m)\},\label{eq:delta1_xi_unified}\\
\delta_{2,\xi}(m)&=c_2\{d_{1,\xi}(m)+d_{2,\xi}(m)-d_{3,\xi}(m)-d_{4,\xi}(m)\},\label{eq:delta2_xi_unified}\\
\delta_{3,\xi}(m)&=c_3\{d_{1,\xi}(m)-d_{2,\xi}(m)+d_{3,\xi}(m)-d_{4,\xi}(m)\}.\label{eq:delta3_xi_unified}
\end{align}
For each frequency mode \(m\in\mathcal I_\xi\), the local symbol block is
\begin{equation}\label{eq:Bhat_xi_m_def}
\widehat B_{\xi,h}(m)=
\begin{bmatrix}
\eta_{1,\xi}(m)&0&0&0&\eta_{3,\xi}(m)&\eta_{2,\xi}(m)\\
0&\eta_{2,\xi}(m)&0&\eta_{3,\xi}(m)&0&\eta_{1,\xi}(m)\\
0&0&\eta_{3,\xi}(m)&\eta_{2,\xi}(m)&\eta_{1,\xi}(m)&0
\end{bmatrix},
\end{equation}
where \(\eta_{j,\xi}(m)=-\mathrm i\,\delta_{j,\xi}(m)\). For \(\xi=\mathrm{full}\), \(m\) ranges over the full frequency set \(\mathcal I\). For \(\xi=\sigma\), \(m\) ranges only over the quotient representative set \(\mathcal R\), because the four frequency points in the same orbit have identical body-diagonal shift symbols.

The material matrix in the corresponding Fourier coordinates is
\begin{equation}\label{eq:CF_xi_def}
C_{\xi,h}^{(F)}:=(I_6\otimes T_\xi)^HC_h(I_6\otimes T_\xi).
\end{equation}
The reduced velocity side operator is therefore
\begin{equation}\label{eq:Av_xi_def}
A_{\xi,h}^{(v)}=\widehat B_{\xi,h} C_{\xi,h}^{(F)}\widehat B_{\xi,h}^H.
\end{equation}
Even though \(\widehat B_{\xi,h}\) is block diagonal by frequency modes or quotient-frequency modes, \(C_{\xi,h}^{(F)}\) is generally not diagonal in \(m\) for spatially heterogeneous materials. Hence \(A_{\xi,h}^{(v)}\) usually still contains intermode couplings. A stronger modewise decoupling occurs only when \(C_h\) is spatially constant, or when a constant reference stiffness is used in a weighted preconditioner.

\subsection{SVD Reduction with Optional Weighting}

Let \(\Psi\in\mathbb C^{6\times6}\) be a constant Hermitian positive definite matrix. The following formulas allow a general weight matrix in order to describe possible reference material preconditioners. Unless otherwise stated, the numerical experiments use \(\Psi=I_6\), so the implementation uses the unweighted local SVD. If a constant reference material weighting is used in a timing run, the corresponding \(\Psi\) is reported with the solver settings. For any \(\xi\in \{\mathrm{full}\}\cup\{\sigma:\sigma\in\{\pm1\}^2\}\) and \(m\in\mathcal I_\xi\), take the ordinary SVD of the weighted local symbol block:
\begin{equation}\label{eq:weighted_svd_unified}
\widehat B_{\xi,h}(m)\Psi^{-1/2}
=P_{\xi,m}\Sigma_{\xi,m}\widetilde Q_{\xi,m}^H.
\end{equation}
Set
\[
Q_{\xi,m}:=\Psi^{-1/2}\widetilde Q_{\xi,m},
\qquad
Q_{\xi,m}^{H,\Psi}:=Q_{\xi,m}^H\Psi .
\]
Then
\begin{equation}\label{eq:weighted_svd_local_unified}
\widehat B_{\xi,h}(m)=P_{\xi,m}\Sigma_{\xi,m}Q_{\xi,m}^{H,\Psi},
\qquad
Q_{\xi,m}^{H,\Psi}Q_{\xi,m}=I.
\end{equation}
The superscript \(H,\Psi\) denotes the \(\Psi\)-weighted Hermitian adjoint. When \(\Psi=I_6\), \(Q_{\xi,m}^{H,\Psi}=Q_{\xi,m}^H\), and the formula reduces to the standard local SVD. This is the setting used in all numerical results. The general \(\Psi\)-notation is retained only to show that the variable transformation naturally extends to reference material weighting and to avoid confusing ordinary Hermitian transposes with weighted adjoints.

Concatenate the local factors over \(m\in\mathcal I_\xi\), and define
\[
\Psi_\xi:=I_{|\mathcal I_\xi|}\otimes\Psi,
\qquad
Q_\xi^{H,\Psi}:=Q_\xi^H\Psi_\xi,
\]
where \(I_{|\mathcal I_\xi|}\) is the identity matrix of order \(|\mathcal I_\xi|\). Then
\begin{equation}\label{eq:Bhat_xi_wsvd}
\widehat B_{\xi,h}=P_\xi\Sigma_\xi Q_\xi^{H,\Psi}.
\end{equation}
Consequently,
\begin{equation}\label{eq:Av_xi_factor}
A_{\xi,h}^{(v)}
=P_\xi\Sigma_\xi
\left(Q_\xi^{H,\Psi}C_{\xi,h}^{(F)}(Q_\xi^{H,\Psi})^H\right)
\Sigma_\xi^H P_\xi^H.
\end{equation}
Define the material core matrix
\begin{equation}\label{eq:A_xi_core_def}
A_{\xi,h,e}:=Q_\xi^{H,\Psi}C_{\xi,h}^{(F)}(Q_\xi^{H,\Psi})^H.
\end{equation}
For the linear system
\[
A_{\xi,h}^{(v)}w_\xi^{(v)}=b_\xi^{(v)},
\]
introduce the two sided variables
\begin{equation}\label{eq:two_sided_variables_unified}
w_{\xi,e}:=\Sigma_\xi^H P_\xi^H w_\xi^{(v)},
\qquad
b_{\xi,e}:=\Sigma_\xi^\dagger P_\xi^H b_\xi^{(v)}.
\end{equation}
On the nonzero singular value subspace, we obtain the core system
\begin{equation}\label{eq:core_system_unified}
A_{\xi,h,e}w_{\xi,e}=b_{\xi,e}.
\end{equation}
Here \(\Sigma_\xi^\dagger\) denotes the Moore Penrose inverse. The use of \(\Sigma_\xi^H\) in the definition of \(w_{\xi,e}\) keeps the transformation dimensionally consistent for both compact and rectangular SVD notation; in the compact diagonal case it reduces to the usual diagonal scaling. At the \(\Gamma\) point or when discrete zero-derivative modes are present, this transformation must be combined with the nullspace projection or deflation treatment in Section~\ref{subsec:gamma_deflation}.

For \(\xi=\mathrm{full}\), \eqref{eq:Bhat_xi_wsvd} and \eqref{eq:core_system_unified} give the Fourier SVD variable transformation in the full space. For \(\xi=\sigma\), they give the corresponding transformation in the quotient-frequency block space. Thus the full and block spaces use the same derivative operator normalization principle; their differences lie mainly in the frequency sets, basis dimensions, and the number of block subproblems that can be solved in parallel.

\section{Discrete Spectral Solver}
\label{sec:impl}

This section describes how the Fourier SVD representation is used in band structure computations, with emphasis on the inverse Lanczos iteration, block merging, matrix-vector products, and \(\Gamma\)-point zero mode treatment. All implementations are based on the discrete operator
\[
A_h(\bk)=B_h(\bk)C_hB_h(\bk)^H
\]
from Section~\ref{sec:blochdisc}. The only difference between the full and block spaces is whether this operator is restricted to the invariant subspaces described in Section~\ref{sec:block}.

\subsection{Lanczos Iteration and Matrix Vector Products}

For a given Bloch wave vector \(\bk\), the discrete generalized eigenvalue problem is
\begin{equation}\label{eq:impl_gep_unified}
A_h(\bk)v_h=\omega^2M_hv_h,
\qquad
A_h(\bk)=B_h(\bk)C_hB_h(\bk)^H.
\end{equation}
After 3D grid-refinement, the number of velocity degrees of freedom is \(3n=3n_1n_2n_3\), making a full eigenvalue decomposition infeasible. Since only a finite number of low-frequency bands are needed, we use an outer Krylov--Schur or Lanczos type iteration.

In the shift-invert framework, one operator application in the outer iteration is converted into a linear solve. Given a positive shift \(\delta>0\), a typical inner system is
\begin{equation}\label{eq:shift_invert_system_unified}
\bigl(A_h(\bk)+\delta M_h\bigr)x=M_hq.
\end{equation}
Here \(q\) is the trial vector provided by the outer Krylov--Schur or Lanczos iteration, and \(x\) is the vector returned by this shift-invert application. This linear system is solved approximately by PCG or an appropriate Hermitian Krylov method. The Fourier SVD variable transformation normalizes the derivative part and provides the framework for the inner preconditioner. At the \(\Gamma\) point or in the presence of discrete zero-derivative modes, it is combined with the deflation or nullspace postprocessing described in Section~\ref{subsec:gamma_deflation}.

For the \(\sigma\)-th block, the generalized eigenvalue problem is
\begin{equation}\label{eq:block_gep_impl_unified}
A_{\sigma,h}(\bk)y=\omega^2M_{\sigma,h}y,
\qquad
A_{\sigma,h}(\bk)=Q_\sigma^*A_h(\bk)Q_\sigma,
\qquad
M_{\sigma,h}=Q_\sigma^*M_hQ_\sigma.
\end{equation}
After \(y\) is computed, the eigenvector in the original velocity space is recovered by \(v_{\sigma,h}=Q_\sigma y\). The eigenvalues obtained from the four blocks are merged and sorted to produce the spectrum of the corresponding full-space discrete problem.

The full and block spaces share the same nodewise material multiplication, derivative symbols, shift-invert outer spectral iteration, and PCG inner solve. Their main differences are the number of degrees of freedom, the Fourier basis, the frequency index set, and the spectral merging procedure. The full space uses the full Fourier basis \(T\) and the complete frequency set \(\mathcal I\), whereas the \(\sigma\)-th block uses \(T_\sigma=TF_\sigma\) and the quotient-frequency set \(\mathcal R\). Thus the block acceleration comes from dimension reduction and parallel organization of the four subproblems, not from changing the underlying discrete spectral problem.

In the matrix-vector implementation, the stiffness operator is never formed explicitly as a full matrix. Instead, it is applied as
\[
x\mapsto B_h(\bk)^Hx
\mapsto C_hB_h(\bk)^Hx
\mapsto B_h(\bk)C_hB_h(\bk)^Hx.
\]
Since \(C_h\) and \(M_h\) are nodewise local multiplication matrices, material multiplication, Bloch periodic differences, FFT type transforms, and vector updates in the PCG iteration are all suitable for GPU execution. The GPU version accelerates matrix-vector products and inner linear solves without changing the discrete spectral problem. The block version further restricts the same operator to the four invariant subspaces.

\subsection{\texorpdfstring{\(\Gamma\)-Point Deflation}{Gamma Point Deflation}}
\label{subsec:gamma_deflation}

At the \(\Gamma\) point, all Bloch phases equal one. The body-diagonal difference operators then have a nontrivial zero-derivative space. In addition to the three physical rigid-translation modes, the even grid body-diagonal construction preserves three checkerboard-type scalar modes. As shown in Appendix~\ref{app:gamma_null}, the full velocity space contains \(12\) zero-derivative modes,
\begin{equation}\label{eq:gamma_null_space_main}
\mathcal N_\Gamma
=
\operatorname{span}\{\chi_\mu e_\alpha:
\mu\in\{0,12,13,23\},\ \alpha=1,2,3\},
\end{equation}
where
\[
\begin{aligned}
\chi_0(i,j,k)&=1,
&\chi_{12}(i,j,k)&=(-1)^{i+j},\\
\chi_{13}(i,j,k)&=(-1)^{i+k},
&\chi_{23}(i,j,k)&=(-1)^{j+k}.
\end{aligned}
\]
Only \(\chi_0 e_\alpha\) correspond to physical rigid translations; the other modes are discrete checkerboard modes induced by the body-diagonal differences. Hence the \(\Gamma\)-point treatment must account for all \(12\) zero-derivative modes rather than only the three physical modes.

Let \(Q_0\in\mathbb C^{3n\times12}\) be an \(M_h\)-orthonormal basis of \(\mathcal N_\Gamma\), i.e., \(Q_0^HM_hQ_0=I\). The \(\Gamma\)-point stiffness matrix is modified by
\begin{equation}\label{eq:gamma_deflation_full_main}
A_{\Gamma,\mathrm{def}}
=
A_h(\Gamma)+d_0M_hQ_0Q_0^HM_h,
\qquad d_0>0.
\end{equation}
This moves the generalized eigenvalues in the nullspace directions to \(d_0\) while leaving the nonzero spectral structure on the \(M_h\)-orthogonal complement unchanged. In postprocessing, the corresponding zero-derivative modes are skipped or explicitly removed from relative error statistics.

When the problem is solved block by block, each block contains three zero-derivative modes at the \(\Gamma\) point. In that case, an \(M_{\sigma,h}\)-orthonormal basis \(Q_{0,\sigma}\) is constructed in each block, and
\begin{equation}\label{eq:gamma_deflation_sector_main}
A_{\sigma,\Gamma,\mathrm{def}}
=
A_{\sigma,h}(\Gamma)+d_0M_{\sigma,h}Q_{0,\sigma}Q_{0,\sigma}^HM_{\sigma,h},
\qquad
Q_{0,\sigma}^HM_{\sigma,h}Q_{0,\sigma}=I.
\end{equation}
After this treatment, the full and block spaces can use the same shift-invert Lanczos and inner PCG framework at the \(\Gamma\) point as at general \(\bk\) points.

\section{Numerical Experiments}
\label{sec:num}

This section validates the proposed discrete structure using a 3D two-phase anisotropic phononic crystal example. The numerical tests are organized to separate external frequency accuracy, algebraic consistency between the full and block implementations, empirical grid-refinement behavior, and computational cost. We first specify the material parameters, subpixel volume-fraction assignment rule, computing environment, and solver settings. We then show the band structure obtained by the present implementation and quantify the spectral agreement between the full-space and block implementations on a \(60\times60\times60\) grid with \(N_k=91\) Bloch-path sampling points and the first \(N_b=40\) bands. A comparison with an independent COMSOL model with \(342459\) degrees of freedom is used to assess the frequency discrepancy of the grouped \(128^3\) computed spectrum. This degree-of-freedom count refers to the finite element reference model and is reported to specify the external comparison scale. Using the full-space GPU spectrum as an internal reference for the same discrete model, we report bandwise and per-wave-vector errors of the merged block GPU spectrum. We further use the \(256^3\) full-space GPU result as a high-resolution reference to present empirical grid-refinement curves at selected high-symmetry Bloch wave vectors. Finally, we report a grid-refinement iteration and runtime diagnostic, followed by the end-to-end wall-clock times of the full-space CPU, full-space GPU, and block GPU implementations.

Unless otherwise specified, all experiments use the Bloch periodic rotated staggered derivative reconstruction described in Section~\ref{sec:blochdisc} and solve
\begin{equation}
\label{eq:num_full_problem}
A_h(\bk)v_h=\omega^2 M_hv_h,
\qquad
A_h(\bk)=B_h(\bk)C_hB_h(\bk)^H .
\end{equation}
The frequencies in the band diagrams are defined by \(f=\omega/(2\pi)\) and are reported in Hz. The Bloch path is
\[
\Gamma\to X\to W\to K\to\Gamma\to L\to U\to W\to L\to K .
\]
Each segment contains \(10\) sampling intervals, giving \(N_k=91\) wave vector samples on the full path. In reduced reciprocal coordinates, defined by \(\bq\cdot\ba{\ell}=2\pi\kappa_\ell\), the high-symmetry points used in the computation are
\begin{equation}
\label{eq:fcc_high_symmetry_points}
\begin{aligned}
\Gamma&=(0,0,0),
&X&=\left(0,\frac12,\frac12\right),
&W&=\left(\frac14,\frac12,\frac34\right),\\
K&=\left(\frac38,\frac38,\frac34\right),
&L&=\left(\frac12,\frac12,\frac12\right),
&U&=\left(\frac14,\frac58,\frac58\right).
\end{aligned}
\end{equation}

\subsection{Materials and Solver Settings}
\label{subsec:num_setting}

The computational domain is an FCC periodic cell. The lattice constant is
\(a_0=3.0\times10^{-2}\,\mathrm{m}\), and the lattice basis vectors are
\begin{equation}
\label{eq:num_lattice_vectors}
\bm a_1=(0,a_0/2,a_0/2)^\top,\qquad
\bm a_2=(a_0/2,0,a_0/2)^\top,\qquad
\bm a_3=(a_0/2,a_0/2,0)^\top .
\end{equation}
The grid points are indexed in the reduced coordinate \(\br\in[0,1)^3\), while the elastic stiffness components are interpreted in the Cartesian physical frame. Accordingly, the derivative operators used in \(B_h(\bk)\) are the Cartesian combinations obtained from \(\nabla_{\mathbf x}=A_{\rm lat}^{-T}\nabla_{\br}\), as described in \eqref{eq:gradient_transform_main} and \eqref{eq:Dcal_cartesian_transform_main}. The reduced Bloch vector \(\bk\) on the high-symmetry path determines the boundary phase by \(\phi_\ell=2\pi\kappa_\ell\).

The material consists of a low quartz matrix and a spherical silicon inclusion. The room temperature elastic constants of low quartz are taken from experimentally averaged values reported in the literature, and the density and cubic elastic constants of silicon are taken from standard semiconductor material data \cite{PabstGregorova2013,IOFFESilicon}. The spherical inclusion is centered at the reduced coordinate \((1/2,1/2,1/2)\) with radius \(R_{\rm sph}/a_0=0.25\). We use subpixel smoothing, equivalently a volume fraction or cut-cell averaging rule, to assign material parameters. Cells entirely inside one material phase are assigned the parameters of that phase. For cells cut by the spherical interface, the local silicon volume fraction \(\theta_i\in[0,1]\) is computed and the local density and stiffness are assigned by
\begin{equation}
\label{eq:subpixel_material_average}
\rho_i=(1-\theta_i)\rho_1+\theta_i\rho_2,
\qquad
C_i=(1-\theta_i)C^{(1)}+\theta_i C^{(2)} .
\end{equation}
This treatment reduces the geometric error caused by voxelization of the curved interface, especially in the grid-refinement tests. The linear volume fraction average is used only as a numerical device to mitigate curved interface geometry error; it is not intended as a rigorous local effective medium model. The full-space, block-space, refinement, and timing comparisons all use the same material assignment rule.

We use the Voigt ordering \([11,22,33,23,13,12]\) and the engineering shear convention. The density of low quartz is
\(\rho_1=2650\,\mathrm{kg\,m^{-3}}\), and its nonzero independent stiffness entries are
\begin{equation}
\label{eq:num_material_C1_nonzero}
\begin{aligned}
&C^{(1)}_{11}=C^{(1)}_{22}=86.6,
\qquad C^{(1)}_{12}=6.9,
\qquad C^{(1)}_{13}=C^{(1)}_{23}=13.6,\\
&C^{(1)}_{33}=106.7,
\qquad C^{(1)}_{14}=-17.7,
\qquad C^{(1)}_{24}=17.7,\\
&C^{(1)}_{44}=C^{(1)}_{55}=57.7,
\qquad C^{(1)}_{56}=-17.7,
\qquad C^{(1)}_{66}=39.85 .
\end{aligned}
\end{equation}
The density of the silicon inclusion is \(\rho_2=2329\,\mathrm{kg\,m^{-3}}\), and its nonzero independent stiffness entries are
\begin{equation}
\label{eq:num_material_C2_nonzero}
\begin{aligned}
&C^{(2)}_{11}=C^{(2)}_{22}=C^{(2)}_{33}=166.0,
\qquad C^{(2)}_{12}=C^{(2)}_{13}=C^{(2)}_{23}=64.0,\\
&C^{(2)}_{44}=C^{(2)}_{55}=C^{(2)}_{66}=79.6 .
\end{aligned}
\end{equation}
All stiffness entries above are listed in \(\mathrm{GPa}\). In the assembled generalized eigenvalue problem, these entries are converted to SI units by multiplying by \(10^9\), so that the reported frequencies are in Hz after using \(f=\omega/(2\pi)\). Unlisted entries are determined by matrix symmetry or are zero. The signs of the \(C_{14}\)-related terms in low quartz depend on the handedness of the crystal coordinates and the Voigt ordering convention. We adopt the coordinate convention shown in \eqref{eq:num_material_C1_nonzero} and use it consistently in all computations.

All \(60\times60\times60\) grid experiments compute the first \(N_b=40\) bands. The full-space velocity degrees of freedom equal \(3\times60^3=648000\). The Bloch path uses the \(N_k=91\) samples specified at the beginning of Section~\ref{sec:num}.

\paragraph{Computing environment and solver parameters.}
All programs are run in MATLAB R2024b. The CPU version is tested on a CPU queue, where each node has \(128\) CPU cores and \(384\,\mathrm{GB}\) of memory. The GPU version is tested on the \texttt{gpu\_v100} queue, whose nodes have \(512\,\mathrm{GB}\) memory and multiple NVIDIA Tesla V100 GPUs, each with \(32\,\mathrm{GB}\) memory. The full-space GPU test uses one V100 GPU. The block GPU test uses a MATLAB parallel pool to assign the four blocks to four V100 GPUs.

The outer eigenvalue problem is solved by MATLAB's \texttt{eigs}, and the inner shift-invert linear systems are solved by PCG. Unless otherwise specified, both the outer \texttt{eigs} tolerance and the inner PCG tolerance are \(10^{-12}\). The maximum number of outer iterations is \(300\), and the maximum number of inner PCG iterations is \(200\). The positive shift \(\delta\) is set internally after nondimensionalization of the assembled generalized eigenproblem. In the present implementation we use
\begin{equation}
\label{eq:num_shift_delta_rule}
\delta=0.30\,\lambda_{\rm scale},
\qquad
\lambda_{\rm scale}:=C_{\max},
\qquad
C_{\max}:=\max_{\ell=1,2}\max_{1\le p,q\le6}
\bigl|C^{(\ell)}_{pq}\bigr|,
\end{equation}
where \(\lambda_{\rm scale}\) is used only as a fixed numerical scale for the shifted linear systems. The same value of \(\delta\) is used in the corresponding full-space and block experiments, so the shift does not affect the full block comparison. At the \(\Gamma\) point, the deflation parameter is \(d_0=\delta\). In the \(60^3\)-grid, \(N_k=91\) tests, the average number of inner PCG iterations is about \(17\). The number of outer \texttt{eigs} iterations is not recorded as a separate diagnostic; thus, timing is reported as the wall-clock time of one complete program run, without separately decomposing the contributions from the outer spectral iteration, inner linear solves, preconditioner construction, or data-transfer.

\subsection{Band Structures}
\label{subsec:num_full_block_bands}

Figure~\ref{fig:num_full_block_bands} shows the band structure obtained by the present implementation under the material setting, Bloch path, number of bands, and solver parameters described in Section~\ref{subsec:num_setting}. The full-space and block spectral agreement is quantified separately in Section~\ref{subsec:num_full_block_error}.

\begin{figure}[htbp]
\centering
\includegraphics[width=0.72\textwidth]{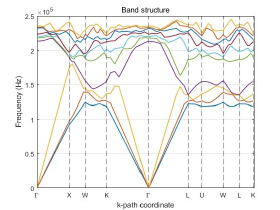}
\caption{Band structure obtained by the present implementation on a \(60\times60\times60\) grid. The result is obtained under the material setting, Bloch path, and solver parameters described in Section~\ref{subsec:num_setting}.}
\label{fig:num_full_block_bands}
\end{figure}

For the present symmetric benchmark, the full-space computation on sufficiently fine grids produces eigenvalues that appear in near-fourfold clusters. The parity decomposition proved in Section~\ref{sec:block} implies that the full-space spectrum is the union of the four block spectra, but it does not imply fourfold degeneracy by itself. The observed near clustering is therefore regarded as a feature of the chosen geometry and material setting rather than as a general consequence of parity blocking. The four values in each local cluster are not interpreted as four well separated physical branches; their within-cluster spread is reported as a small numerical splitting. Figure~\ref{fig:near_fourfold_cluster} illustrates this local structure. The COMSOL comparison in Section~\ref{subsec:num_comsol_reference} provides an external discrepancy scale for interpreting the cluster spread, and the grouped diagnostics in Section~\ref{subsec:num_band_convergence} should be read in this benchmark-specific sense.

\begin{figure}[htbp]
\centering
\includegraphics[width=0.62\textwidth]{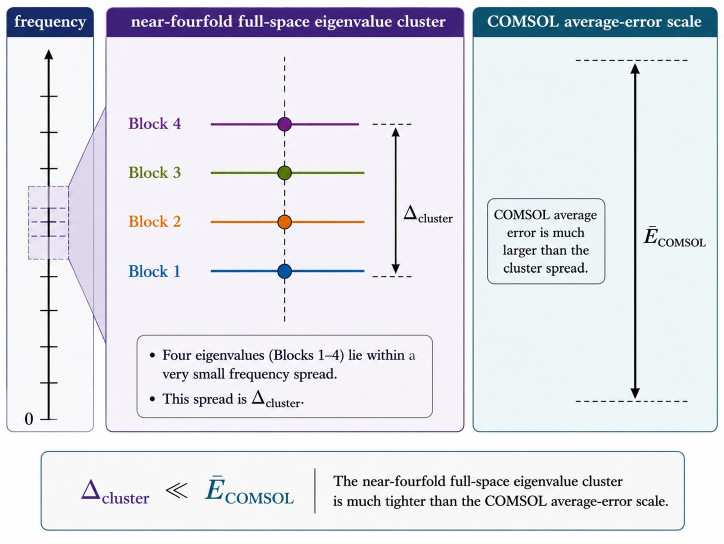}
\caption{Schematic illustration of a near-fourfold eigenvalue cluster in the full-space computation. The block decomposition guarantees that the full-space spectrum is the union of the four block spectra, but it does not by itself imply fourfold degeneracy. The cluster spread is therefore reported as a numerical diagnostic for the present symmetric benchmark.}
\label{fig:near_fourfold_cluster}
\end{figure}

\subsection{Comparison with COMSOL Reference}
\label{subsec:num_comsol_reference}

This subsection reports the external reference comparison between the COMSOL model and the result obtained by the present method. The COMSOL model uses \(342459\) finite element degrees of freedom. This number is stated here because the COMSOL result is used as an independent external frequency reference rather than as an algebraic counterpart of the staggered-grid system. The result from the present method is taken from the full-space \(128^3\) computation and contains the first \(40\) bands along the same \(N_k=91\) Bloch path. To match the ten COMSOL bands, the computed bands are grouped by averaging every four consecutive bands. The comparison therefore uses ten grouped bands, denoted by \(G1,\ldots,G10\). Zero-frequency modes at the \(\Gamma\) point are excluded from relative error statistics.

Over all compared nonzero entries, the maximum absolute discrepancy is \(2.304220\times10^3\,\mathrm{Hz}\), occurring at \(i_k=34\) in group \(G10\), and the maximum relative discrepancy is \(9.705655\times10^{-3}\), also occurring at \(i_k=34\) in group \(G10\). The mean absolute discrepancy over all entries is \(1.720223\times10^2\,\mathrm{Hz}\), and the mean relative discrepancy over valid nonzero entries is \(8.775761\times10^{-4}\). Table~\ref{tab:comsol_code_comparison} reports the high-symmetry point comparison. For each high-symmetry sample, the table lists the band group with the largest relative discrepancy. Repeated high-symmetry labels are distinguished by their occurrence along the path.

\begin{table}[htbp]
\centering
\caption{Comparison between the COMSOL reference model with \(342459\) finite element degrees of freedom and the full-space \(128^3\) result obtained by the present method. The present implementation uses four-band averaging to match the ten COMSOL bands. For each high-symmetry sample, the row reports the band group with the largest relative discrepancy. Frequencies and absolute errors are reported in Hz.}
\label{tab:comsol_code_comparison}
\small
\renewcommand{\arraystretch}{1.12}
\begin{tabular}{cccccc}
\toprule
Wave vector & Group & COMSOL & Present result & Absolute error & Relative error \\
\midrule
\(\Gamma\) & \(G7\)  & \(2.2356\times10^5\) & \(2.2241\times10^5\) & \(1.1536\times10^3\) & \(5.1603\times10^{-3}\) \\
\(X\)        & \(G8\)  & \(2.2324\times10^5\) & \(2.2297\times10^5\) & \(2.6945\times10^2\) & \(1.2070\times10^{-3}\) \\
\(W\)      & \(G1\)  & \(1.2455\times10^5\) & \(1.2441\times10^5\) & \(1.4041\times10^2\) & \(1.1274\times10^{-3}\) \\
\(K\)      & \(G2\)  & \(1.2640\times10^5\) & \(1.2628\times10^5\) & \(1.1558\times10^2\) & \(9.1438\times10^{-4}\) \\
\(L\)      & \(G4\)  & \(1.3537\times10^5\) & \(1.3522\times10^5\) & \(1.5375\times10^2\) & \(1.1358\times10^{-3}\) \\
\(U\)        & \(G2\)  & \(1.2640\times10^5\) & \(1.2628\times10^5\) & \(1.1558\times10^2\) & \(9.1438\times10^{-4}\) \\
\bottomrule
\end{tabular}
\end{table}

Figure~\ref{fig:code_vs_comsol_error_panels} visualizes the same comparison by band group and along the Bloch path.

\begin{figure}[htbp]
\centering
\includegraphics[width=0.96\textwidth]{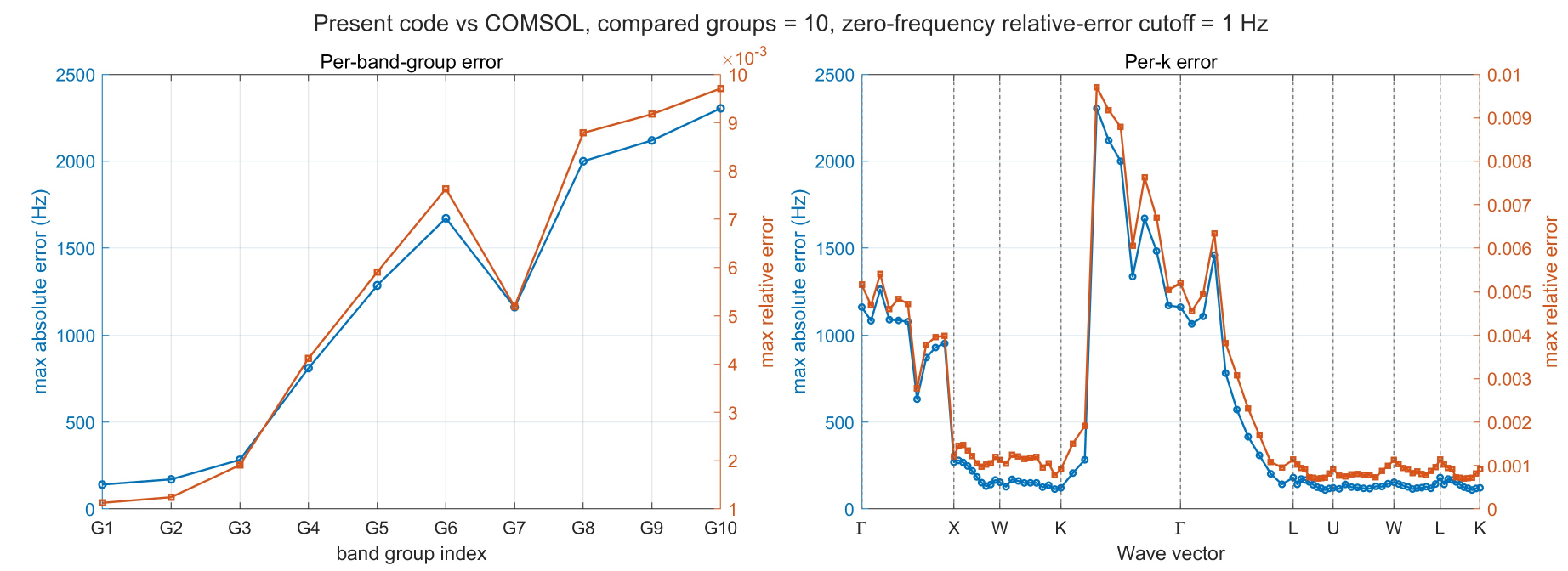}
\caption{Error comparison between the COMSOL reference with \(342459\) finite element degrees of freedom and the grouped \(128^3\) result obtained by the present method. Panel (a) reports the maximum absolute and relative errors by band group, and panel (b) reports the maximum absolute and relative errors along the Bloch path. Relative error statistics exclude the zero-frequency modes at the \(\Gamma\) point.}
\label{fig:code_vs_comsol_error_panels}
\end{figure}

\subsection{Full Space and Block Spectral Agreement}
\label{subsec:num_full_block_error}

To quantify the agreement between the two implementations, the full-space GPU spectrum is used as the internal reference for the same discrete operator. We report the bandwise and per Bloch wave vector errors of the merged block GPU spectrum. Two error measures are used: the maximum absolute frequency difference and the maximum relative frequency difference. Relative errors are computed only on nonzero-frequency branches; the theoretical zero-frequency branches at the \(\Gamma\) point, associated with the zero-derivative modes in Section~\ref{subsec:gamma_deflation}, are excluded from relative error normalization and are not plotted in the relative error curves. This avoids meaningless relative error amplification caused by theoretical zero modes.

Figure~\ref{fig:num_full_block_error_combined} reports the maximum errors by band index and along the Bloch path. In the left panel, for the first \(40\) bands, the maximum absolute error is of order \(10^{-7}\,\mathrm{Hz}\), and the maximum relative error is of order \(10^{-12}\). Thus, under the same material assignment, Bloch path, solver tolerance, and discrete operator, the merged block GPU spectrum is highly consistent with the full-space GPU spectrum. This verifies the algebraic consistency of the block implementation relative to the full-space implementation, rather than the frequency accuracy of the discrete model relative to the continuous physical problem. In the right panel, error peaks mainly occur near the \(\Gamma\) point and some high-symmetry points, where low-frequency zero modes, multiple eigenvalues, or nearly multiple branches make sorting, merging, and relative error normalization more sensitive to small numerical perturbations. After the theoretical zero-frequency modes are excluded, the overall maximum relative error remains at the \(10^{-12}\) level, further confirming that merging the four block subproblems recovers the full-space discrete spectrum.

\begin{figure}[htbp]
\centering
\begin{subfigure}{0.48\textwidth}
\centering
\includegraphics[width=\textwidth]{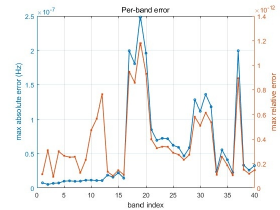}
\caption{Per-band maximum error.}
\label{fig:num_error_by_band}
\end{subfigure}\hfill
\begin{subfigure}{0.48\textwidth}
\centering
\includegraphics[width=\textwidth]{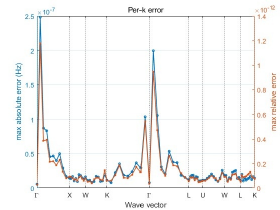}
\caption{Per Bloch wave vector maximum error.}
\label{fig:num_error_by_k}
\end{subfigure}
\caption{Maximum errors between the full-space GPU and block GPU results. Panel (a) shows the bandwise maximum errors, and panel (b) shows the per Bloch wave vector maximum errors. In both panels, the left axis shows the maximum absolute error and the right axis shows the maximum relative error. Relative error statistics exclude the zero-frequency modes at the \(\Gamma\) point.}
\label{fig:num_full_block_error_combined}
\end{figure}

\subsection{Grid Refinement}
\label{subsec:num_band_convergence}

The band convergence test examines the empirical decay of frequency errors under grid refinement. The full-space GPU result on a \(256^3\) grid is used as the high-resolution reference, and the tested grids are \(8^3\), \(16^3\), \(32^3\), \(64^3\), and \(128^3\). All refinement tests use the same continuous spherical inclusion geometry, material parameters, Bloch path, and subpixel volume-fraction assignment rule.

Motivated by the near-fourfold clustering discussed in Section~\ref{subsec:num_full_block_bands}, the first \(40\) bands are grouped by averaging every four consecutive bands, giving \(10\) band groups. This grouping is used as a diagnostic device for the present symmetric benchmark, not as a general band-tracking rule. The error of each group is the average relative error of the nonzero-frequency branches in that group with respect to the \(256^3\) reference result. At the \(\Gamma\) point, theoretical zero-frequency modes are excluded from the relative-error average to avoid meaningless zero-frequency normalization. The horizontal axis is \(h=1/N_g\), and the vertical axis is the grouped average relative error. An \(O(h^2)\) reference slope is added to illustrate the empirical decay trend under grid refinement. Because this grouped metric may average out small splittings inside a local four-band cluster, it is used only as a convergence diagnostic for this benchmark and is not intended to replace branch-resolved band tracking in general configurations.

Figure~\ref{fig:num_band_convergence_gamma_x_w} shows the grouped-band relative errors at three representative high-symmetry wave vectors, \(\Gamma\), \(X\), and \(W\). Under the above grouped-error metric and with the \(256^3\) full-space GPU result as reference, most band groups exhibit an empirical decay close to the \(O(h^2)\) reference slope. This observation should be interpreted as a numerical diagnostic at representative wave vectors, rather than as a general second-order error estimate for heterogeneous anisotropic problems. Deviations in a few groups, especially at the \(\Gamma\) point or near multiple and nearly crossing branches, are mainly associated with zero-mode removal, branch sorting, and relative error normalization.

\begin{figure}[htbp]
\centering
\begin{minipage}{0.32\textwidth}
\centering
\includegraphics[width=\textwidth]{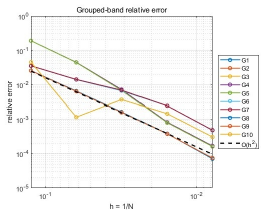}
\par\smallskip
{\small (a) \(\Gamma\) point}
\end{minipage}\hfill
\begin{minipage}{0.32\textwidth}
\centering
\includegraphics[width=\textwidth]{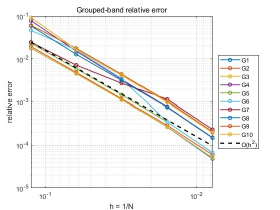}
\par\smallskip
{\small (b) \(X\) point}
\end{minipage}\hfill
\begin{minipage}{0.32\textwidth}
\centering
\includegraphics[width=\textwidth]{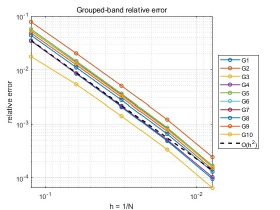}
\par\smallskip
{\small (c) \(W\) point}
\end{minipage}
\caption{Grouped band relative error convergence curves at representative high-symmetry wave vectors: \(\Gamma\), \(X\), and \(W\). \(G1,\ldots,G10\) denote the groups obtained by averaging every four of the first \(40\) bands. The dashed line indicates an \(O(h^2)\) reference slope.}
\label{fig:num_band_convergence_gamma_x_w}
\end{figure}

\subsection{Grid-Refinement Iteration Counts and Runtime}
\label{subsec:num_grid_runtime}

We next report the dependence of the inner iteration count and the running time on grid refinement. This diagnostic separates two effects that are mixed in a single end-to-end timing comparison: the growth of the discrete problem size and the change in the number of Krylov iterations required by the shifted linear systems. The sweep uses the full \(N_k=91\) Bloch path for every grid, the same material assignment, the same shift rule, the same tolerance settings, and the same full-space GPU implementation.

Table~\ref{tab:grid_runtime} shows that the average inner PCG iteration count increases from \(28.66\) on the \(8^3\) grid to about \(34.28\) on the \(128^3\) grid, while no inner PCG failures occur in any run. The wall-clock time increases from \(3715.19\,\mathrm{s}\) to \(34175.14\,\mathrm{s}\), reflecting both the enlarged velocity space and the cost of the full Bloch path calculation. Figure~\ref{fig:grid_runtime_iterations} gives the corresponding visual summary.

\begin{table}[htbp]
\centering
\caption{Grid-refinement iteration counts and running time for the full \(N_k=91\) Bloch path. All runs use the same material assignment, shift rule, tolerance settings, and full-space GPU implementation.}
\label{tab:grid_runtime}
\small
\renewcommand{\arraystretch}{1.12}
\begin{tabular}{cccccc}
\toprule
Grid & Velocity DOFs & \(N_k\) & Avg. PCG iters & Max PCG iters & Wall time (s) \\
\midrule
\(8^3\)   & \(1{,}536\)     & \(91\) & \(28.66\) & \(43\) & \(3715.19\) \\
\(16^3\)  & \(12{,}288\)    & \(91\) & \(32.40\) & \(47\) & \(3933.84\) \\
\(32^3\)  & \(98{,}304\)    & \(91\) & \(34.26\) & \(48\) & \(4617.74\) \\
\(64^3\)  & \(786{,}432\)   & \(91\) & \(34.27\) & \(48\) & \(6389.75\) \\
\(128^3\) & \(6{,}291{,}456\) & \(91\) & \(34.28\) & \(48\) & \(34175.14\) \\
\bottomrule
\end{tabular}
\end{table}

\begin{figure}[htbp]
\centering
\includegraphics[width=0.96\textwidth]{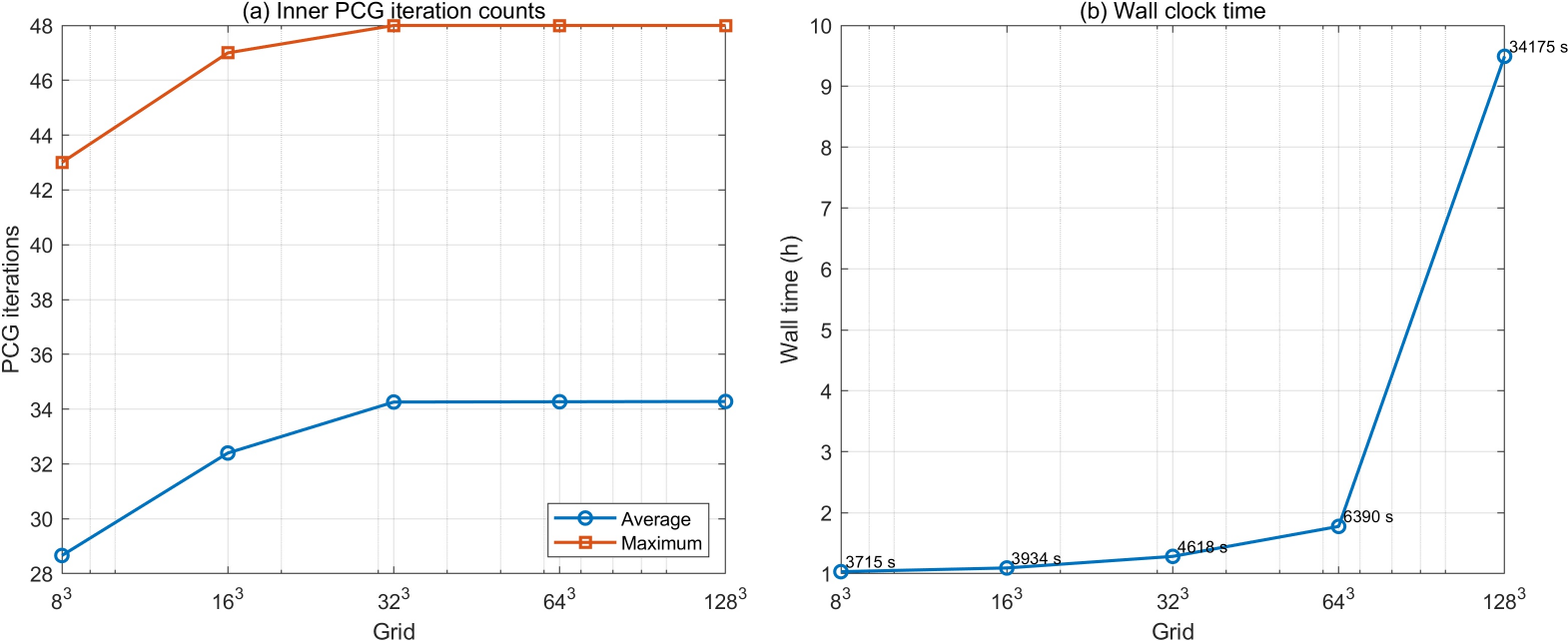}
\caption{Grid-refinement inner PCG iteration counts and wall-clock time for the full \(N_k=91\) Bloch path. Panel (a) shows the average and maximum PCG iteration counts, and panel (b) shows the end-to-end wall-clock time. All runs use the same material assignment, shift rule, tolerance settings, and full-space GPU implementation.}
\label{fig:grid_runtime_iterations}
\end{figure}

\subsection{Wall-Clock Time}
\label{subsec:num_timing}

Table~\ref{tab:num_timing_revised} reports the wall-clock time of one complete band scan on a \(60\times60\times60\) grid with \(N_k=91\) Bloch wave vectors and the first \(40\) bands. All three runs use the material assignment, solver tolerance, and shift-invert settings specified in Section~\ref{subsec:num_setting}. The full-space CPU version takes \(55847\,\mathrm{s}\), the single-GPU full-space version takes \(3912\,\mathrm{s}\), and the four-GPU parallel block version takes \(1709\,\mathrm{s}\). Relative to the full-space CPU version, the single-GPU full-space version and the four-GPU block version achieve end-to-end wall-clock speedups of \(14.27\) and \(32.68\), respectively. Relative to the single-GPU full-space version, the four-GPU block version achieves an end-to-end wall-clock speedup of \(2.29\).

The reported times are complete wall-clock times and include matrix-vector products, inner PCG iterations, outer eigenvalue iterations, \(\Gamma\)-point nullspace treatment, and the necessary data-organization overhead. The implementation does not separately record the preconditioner construction time, outer \texttt{eigs} iteration time, or data-transfer time. Therefore, these results measure the end-to-end performance of the present implementation on the tested hardware. In particular, the comparison between the single-GPU full-space run and the four-GPU block run is a multi-GPU wall-clock comparison, not a hardware-normalized single-GPU speedup. The observed acceleration results from the exact dimension reduction of the four invariant blocks, their parallel solution, and the use of multiple GPU resources.

\begin{table}[htbp]
\centering
\caption{End-to-end running time on a \(60\times60\times60\) grid with \(N_k=91\) and the first \(40\) bands.}
\label{tab:num_timing_revised}
\small
\renewcommand{\arraystretch}{1.12}
\begin{tabular}{lccc}
\toprule
Metric & Full CPU & Full GPU & Block GPU \\
\midrule
Hardware
& CPU
& \(1\times\mathrm{V100}\)
& \(4\times\mathrm{V100}\) \\
Running time
& \(55847\,\mathrm{s}\)
& \(3912\,\mathrm{s}\)
& \(1709\,\mathrm{s}\) \\
Equivalent time
& \(15\) h \(30\) min \(47\) s
& \(1\) h \(5\) min \(12\) s
& \(28\) min \(29\) s \\
Speedup over CPU
& \(1.00\)
& \(14.27\)
& \(32.68\) \\
Speedup over full GPU
& n.a.
& \(1.00\)
& \(2.29\) \\
\bottomrule
\end{tabular}
\end{table}

\section{Conclusion}
\label{sec:conclusion}

This paper has proposed a structured discretization framework based on body-diagonal derivative reconstruction and parity blocking for Bloch-periodic band-structure calculations of 3D generally anisotropic and spatially heterogeneous phononic crystals. The method reconstructs the three lattice coordinate derivatives from phase-shifted differences along four body-diagonal directions and incorporates the coupled derivative terms in a general anisotropic Voigt constitutive law into the discrete divergence-strain structure
\[
A_h(\bk)v_h=\omega^2 M_hv_h,\qquad
A_h(\bk)=B_h(\bk)C_hB_h(\bk)^H.
\]
When the three grid sizes are even and the stiffness and mass matrices enter as nodewise local multiplication matrices, the body-diagonal shifts induce two independent parity invariants, leading to four mutually uncoupled block subspaces. A unified Fourier SVD representation and its weighted extension were formulated for both the full and block spaces, and a practical multi-\(\bk\) band scan workflow was developed using \(\Gamma\)-point nullspace treatment, shift-invert Lanczos iteration, inner PCG solves, and GPU matrix-vector products.

Numerical experiments on a \(60\times60\times60\) grid with \(N_k=91\) Bloch wave vectors and the first \(40\) bands show that the merged block GPU spectrum agrees with the full-space GPU spectrum at the displayed frequency scale. After excluding the zero-frequency modes at the \(\Gamma\) point, the maximum absolute error over bands and wave vectors is of order \(10^{-7}\,\mathrm{Hz}\), and the maximum relative error is of order \(10^{-12}\). This verifies the algebraic consistency between the block and full-space implementations for the same discrete operator. The external COMSOL comparison with the grouped \(128^3\) computed spectrum gives a maximum absolute discrepancy of \(2.304220\times10^3\,\mathrm{Hz}\) and a maximum relative discrepancy of \(9.705655\times10^{-3}\), while the mean relative discrepancy over valid nonzero entries is \(8.775761\times10^{-4}\). The full-space CPU, single-GPU full-space, and four-GPU block implementations require \(15\) h \(30\) min \(47\) s, \(1\) h \(5\) min \(12\) s, and \(28\) min \(29\) s, respectively; the four-GPU block implementation gives an end-to-end wall-clock speedup of about \(2.29\) relative to the single-GPU full-space implementation. This timing comparison reflects the tested hardware allocation, especially the use of four GPUs in the block run, and should not be interpreted as a hardware-normalized single-GPU speedup. Grid-refinement tests using the \(256^3\) full-space GPU result as a reference further show that, under the selected high-symmetry wave vectors and grouped-average error metric, many band groups exhibit empirical decay close to an \(O(h^2)\) reference slope. The grid-refinement sweep from \(8^3\) to \(128^3\) further shows that the average inner PCG iteration count stabilizes at about \(34\) for refined grids, while the wall-clock time increases with the enlarged velocity space.

The applicability of the method is determined by its structural assumptions. The parity blocking relies on even grid sizes in all three directions and on nodewise local multiplication forms of \(C_h\) and \(M_h\). If the material discretization introduces nonlocal cross-node couplings, the commutation relation with the parity operators must be reexamined. For spatially heterogeneous materials, the Fourier SVD representation does not imply complete modewise decoupling of the spectral problem; rather, it provides a structured variable transformation and preconditioning framework for Bloch periodic derivative operators. At the \(\Gamma\) point, in addition to the three physical rigid-translation modes, the even grid body-diagonal differences induce checkerboard-type discrete zero modes, which must be handled by deflation and should not be interpreted as physical rigid-body modes. Future work will focus on higher-order geometric descriptions of curved interfaces, sharper band-tracking near multiple branches, systematic assessment of material assignment rules near curved interfaces, and the possible extension of useful reduced structures to odd grids and more general lattice discretizations.

\section*{CRediT Authorship Contribution Statement}
J. Zhang: Conceptualization, Methodology, Software, Formal analysis, Writing -- original draft. X.-L. Lyu: Software, Validation, Investigation, Visualization, Writing -- review and editing. T. Li: Supervision, Funding acquisition, Writing -- review and editing. W.-W. Lin: Methodology, Supervision, Writing -- review and editing.

\section*{Acknowledgements}
X.-L. Lyu was partially supported by the National Natural Science Foundation of China (NSFC) 12501520 and Basic Research Program of Jiangsu Province BK20251310. T. Li was partially supported by NSFC 12371377 and the Jiangsu Provincial Scientific Research Center of Applied Mathematics under Grant No. BK20233002. This research was funded partially by Shanghai Institute for Mathematics and Interdisciplinary Sciences under grant number SIMIS-ID-2024-LG. We thank Tianhe-2 and the Big Data Computing Center in Southeast University, China, for the use of their computing resources.

\appendix

\section{From the First Order Form to the Elastic Equations}
\label{app:block_verify}

This appendix explains the correspondence between \eqref{eq:first_order_eig_block_main} and the frequency-domain momentum and constitutive equations, and verifies that eliminating the stress variable from the first-order block form recovers the continuous generalized eigenvalue problem \eqref{eq:LEEP_main}.

The operator \(B\) defined in the main text maps Voigt stress variables to the divergence term in the momentum equation. In the Cartesian frame used for the Voigt notation, let
\[
\widetilde{\bm\tau}=
[\tau_{x_1x_1},\tau_{x_2x_2},\tau_{x_3x_3},
\tau_{x_2x_3},\tau_{x_1x_3},\tau_{x_1x_2}]^\top.
\]
Then, up to the common factor \(-\mathrm i\),
\[
B\widetilde{\bm\tau}
=
-\mathrm i
\begin{bmatrix}
\partial_{x_1}\tau_{x_1x_1}
+\partial_{x_2}\tau_{x_1x_2}
+\partial_{x_3}\tau_{x_1x_3}\\
\partial_{x_1}\tau_{x_1x_2}
+\partial_{x_2}\tau_{x_2x_2}
+\partial_{x_3}\tau_{x_2x_3}\\
\partial_{x_1}\tau_{x_1x_3}
+\partial_{x_2}\tau_{x_2x_3}
+\partial_{x_3}\tau_{x_3x_3}
\end{bmatrix}.
\]
Thus \(B\widetilde{\bm\tau}\) represents the Cartesian divergence of the stress tensor in Voigt form. The adjoint \(B^H\), acting on velocity variables, gives the corresponding gradient strain operator under the chosen Hermitian convention. Combined with the local constitutive matrix \(\bm C(\mathbf x)\), this gives the Voigt stress variable through
\[
\widetilde{\bm\tau}=\frac{1}{\omega}\bm C(\mathbf x)B^H\bm v
\]
under the scaling used in the first-order eigenvalue form. Substituting this relation into the velocity equation gives
\[
\rho(\mathbf x)^{-1}B\widetilde{\bm\tau}
=
\omega \bm v,
\]
and hence
\[
B\bm C(\mathbf x)B^H\bm v
=
\omega^2 \rho(\mathbf x)\bm v.
\]
This is precisely \eqref{eq:LEEP_main}. Therefore, the block eigenvalue problem \eqref{eq:first_order_eig_block_main} and the second-order generalized eigenvalue problem \eqref{eq:LEEP_main} have the same nonzero spectral content under the stated velocity-stress scaling.

\section{Diagonalizing the Bloch Shift Operators}
\label{app:K_spectrum}

This appendix records the spectral structure of the Bloch shift matrices used in Section~\ref{sec:blochdisc}. It explains why the 3D shifts \(H_1,H_2,H_3\) can be simultaneously diagonalized.

\subsection{One Dimensional Bloch Shifts}

Consider the one-dimensional Bloch shift
\[
K_\ell=
\begin{pmatrix}
0 & I_{n_\ell-1}\\
e^{\mathrm i\phi_\ell} & 0
\end{pmatrix}.
\]
It satisfies \(K_\ell^{n_\ell}=e^{\mathrm i\phi_\ell}I\). Therefore, its eigenvalues are
\[
\lambda_{\ell,p}
=
\exp\!\left(\mathrm i\frac{\phi_\ell+2\pi p}{n_\ell}\right),
\qquad p=0,\ldots,n_\ell-1.
\]
An associated normalized eigenvector is
\[
x_{\ell,p}
=
\frac{1}{\sqrt{n_\ell}}
(1,\lambda_{\ell,p},\lambda_{\ell,p}^2,\ldots,\lambda_{\ell,p}^{n_\ell-1})^\top .
\]
Let
\[
X_\ell=(x_{\ell,0},x_{\ell,1},\ldots,x_{\ell,n_\ell-1}).
\]
Then \(X_\ell\) is unitary and
\[
K_\ell=X_\ell\Lambda_\ell X_\ell^H,
\qquad
\Lambda_\ell=\operatorname{diag}(\lambda_{\ell,0},\ldots,\lambda_{\ell,n_\ell-1}).
\]

\subsection{Tensor Product Diagonalization}

With
\[
T=X_3\otimes X_2\otimes X_1,
\]
the 3D shift matrices in \eqref{eq:H_def_main} satisfy
\[
H_1=T(\,I_{n_3}\otimes I_{n_2}\otimes\Lambda_1\,)T^H,
\]
\[
H_2=T(\,I_{n_3}\otimes \Lambda_2\otimes I_{n_1}\,)T^H,
\qquad
H_3=T(\,\Lambda_3\otimes I_{n_2}\otimes I_{n_1}\,)T^H.
\]
Since the three shifts commute, all body-diagonal shifts and difference operators constructed from them inherit the same common diagonalization. This provides the algebraic basis for the fast operator actions discussed in Section~\ref{sec:impl}.

\section{\texorpdfstring{\(\Gamma\)-Point Zero Modes}{Gamma Point Zero Modes}}
\label{app:gamma_null}

This appendix supplements the conclusion on the \(\Gamma\)-point zero modes in Section~\ref{sec:impl}.

In a standard periodic derivative discretization, constant vector fields give three rigid translation zero modes. For the even grid body-diagonal differences used in this paper, however, the four body-diagonal shifts preserve the following four scalar modes:
\[
\begin{aligned}
\chi_0&=1,
&\chi_{12}&=(-1)^{i+j},\\
\chi_{13}&=(-1)^{i+k},
&\chi_{23}&=(-1)^{j+k}.
\end{aligned}
\]
Therefore,
\[
D_{d_\ell}\chi_\mu=0,\qquad
\ell=1,2,3,4,\quad
\mu\in\{0,12,13,23\}.
\]
Multiplying each scalar zero-derivative mode by the three velocity components gives
\[
\dim\ker B_h(\Gamma)^H=12.
\]
Here \(\chi_0\mathbf e_\alpha\) corresponds to the physical rigid translation mode, whereas the other three scalar modes correspond to additional checkerboard-type discrete zero modes induced by the even grid body-diagonal differences.

\bibliographystyle{abbrvnat}
\bibliography{refs}

@article{SigalasEconomou1992,
  author  = {Sigalas, M. M. and Economou, E. N.},
  title   = {Elastic and acoustic wave band structure},
  journal = {Journal of Sound and Vibration},
  volume  = {158},
  number  = {2},
  pages   = {377--382},
  year    = {1992},
  doi     = {10.1016/0022-460X(92)90059-7}
}

@article{Kushwaha1993,
  author  = {Kushwaha, M. S. and Halevi, P. and Dobrzynski, L. and Djafari-Rouhani, B.},
  title   = {Acoustic band structure of periodic elastic composites},
  journal = {Physical Review Letters},
  volume  = {71},
  number  = {13},
  pages   = {2022--2025},
  year    = {1993},
  doi     = {10.1103/PhysRevLett.71.2022}
}

@article{Hussein2014,
  author  = {Hussein, Mahmoud I. and Leamy, Michael J. and Ruzzene, Massimo},
  title   = {Dynamics of phononic materials and structures: Historical origins, recent progress, and future outlook},
  journal = {Applied Mechanics Reviews},
  volume  = {66},
  number  = {4},
  pages   = {040802},
  year    = {2014},
  doi     = {10.1115/1.4026911}
}

@article{Laude2009,
  author  = {Laude, Vincent and Achaoui, Younes and Benchabane, Sarah and Khelif, Abdelkrim},
  title   = {Evanescent {Bloch} waves and the complex band structure of phononic crystals},
  journal = {Physical Review B},
  volume  = {80},
  pages   = {092301},
  year    = {2009},
  doi     = {10.1103/PhysRevB.80.092301}
}

@article{LyuTianLiLin2024JSC,
  author  = {Lyu, Xing-Long and Tian, Heng and Li, Tiexiang and Lin, Wen-Wei},
  title   = {Fast {SVD}-Based Linear Elastic Eigenvalue Problem Solver for Band Structures of {3D} Phononic Crystals},
  journal = {Journal of Scientific Computing},
  volume  = {99},
  pages   = {20},
  year    = {2024},
  doi     = {10.1007/s10915-024-02483-8}
}

@article{Wu2004PRB,
  author  = {Wu, Tsung-Tsong and Huang, Zi-Gui and Lin, Shih},
  title   = {Surface and bulk acoustic waves in two-dimensional phononic crystals consisting of materials with general anisotropy},
  journal = {Physical Review B},
  volume  = {69},
  pages   = {094301},
  year    = {2004},
  doi     = {10.1103/PhysRevB.69.094301}
}

@article{WuHsuHuang2005,
  author  = {Wu, Tsung-Tsong and Hsu, Zhen-Cheng and Huang, Zi-Gui},
  title   = {Band gaps and the electromechanical coupling coefficient of a surface acoustic wave in a two-dimensional piezoelectric phononic crystal},
  journal = {Physical Review B},
  volume  = {71},
  pages   = {064303},
  year    = {2005},
  doi     = {10.1103/PhysRevB.71.064303}
}

@article{Wang2021APM,
  author  = {Wang, Liqun and Zheng, Hui and Zhao, Meiling and Shi, Liwei and Hou, Songming},
  title   = {{Petrov--Galerkin} method for the band structure computation of anisotropic and piezoelectric phononic crystals},
  journal = {Applied Mathematical Modelling},
  volume  = {89},
  pages   = {1090--1105},
  year    = {2021},
  doi     = {10.1016/j.apm.2020.08.026}
}

@article{Yan2025JCP,
  author  = {Yan, Jiaxin and Wang, Liqun and Zhang, Yifan and Zhao, Meiling and Shi, Liwei},
  title   = {A {FEM} towards {3D} multi-component elastic interface problems and phononic crystals with nested and intersected scatterer geometries},
  journal = {Journal of Computational Physics},
  volume  = {534},
  pages   = {114017},
  year    = {2025},
  doi     = {10.1016/j.jcp.2025.114017}
}

@article{Virieux1984,
  author  = {Virieux, Jean},
  title   = {{SH}-wave propagation in heterogeneous media: Velocity--stress finite-difference method},
  journal = {Exploration Geophysics},
  volume  = {15},
  number  = {4},
  pages   = {265--276},
  year    = {1984},
  doi     = {10.1071/EG984265A}
}

@article{Virieux1986,
  author  = {Virieux, Jean},
  title   = {{P-SV} wave propagation in heterogeneous media: Velocity--stress finite-difference method},
  journal = {Geophysics},
  volume  = {51},
  number  = {4},
  pages   = {889--901},
  year    = {1986},
  doi     = {10.1190/1.1442147}
}

@article{Saenger2000,
  author  = {Saenger, Erik H. and Gold, Norbert and Shapiro, Serge A.},
  title   = {Modeling the propagation of elastic waves using a modified finite-difference grid},
  journal = {Wave Motion},
  volume  = {31},
  number  = {1},
  pages   = {77--92},
  year    = {2000},
  doi     = {10.1016/S0165-2125(99)00023-2}
}

@article{Saenger2004,
  author  = {Saenger, Erik H. and Bohlen, Thomas},
  title   = {Finite-difference modeling of viscoelastic and anisotropic wave propagation using the rotated staggered grid},
  journal = {Geophysics},
  volume  = {69},
  number  = {2},
  pages   = {583--591},
  year    = {2004},
  doi     = {10.1190/1.1707078}
}

@article{BansalSen2008,
  author  = {Bansal, Reeshidev and Sen, Mrinal K.},
  title   = {Finite-difference modelling of {S}-wave splitting in anisotropic media},
  journal = {Geophysical Prospecting},
  volume  = {56},
  number  = {3},
  pages   = {293--312},
  year    = {2008},
  doi     = {10.1111/j.1365-2478.2007.00693.x}
}

@article{Bernth2011,
  author  = {Bernth, Henrik and Chapman, Chris H.},
  title   = {A comparison of the dispersion relations for anisotropic elastodynamic finite-difference grids},
  journal = {Geophysics},
  volume  = {76},
  number  = {3},
  pages   = {WA43--WA50},
  year    = {2011},
  doi     = {10.1190/1.3555067}
}

@article{Yang2015,
  author  = {Yang, L. and Yan, H. and Liu, H.},
  title   = {Optimal rotated staggered-grid finite-difference schemes for elastic wave modeling in {TTI} media},
  journal = {Journal of Applied Geophysics},
  volume  = {122},
  pages   = {40--52},
  year    = {2015},
  doi     = {10.1016/j.jappgeo.2015.08.007}
}

@article{Gao2017,
  author  = {Gao, Kai and Huang, Lianjie},
  title   = {An improved rotated staggered-grid finite-difference method with fourth-order temporal accuracy for elastic-wave modeling in anisotropic media},
  journal = {Journal of Computational Physics},
  volume  = {350},
  pages   = {361--386},
  year    = {2017},
  doi     = {10.1016/j.jcp.2017.08.053}
}

@article{Chen2006,
  author  = {Chen, Hao and Wang, Xiuming and Zhao, Haibo},
  title   = {A rotated staggered grid finite-difference with the absorbing boundary condition of a perfectly matched layer},
  journal = {Chinese Science Bulletin},
  volume  = {51},
  number  = {19},
  pages   = {2304--2314},
  year    = {2006},
  doi     = {10.1007/s11434-006-2127-8}
}

@article{Zhang2012,
  author  = {Zhang, Wei and Shen, Yang and Zhao, Li},
  title   = {Three-dimensional anisotropic seismic wave modelling in spherical coordinates by a collocated-grid finite-difference method},
  journal = {Geophysical Journal International},
  volume  = {188},
  number  = {3},
  pages   = {1359--1381},
  year    = {2012},
  doi     = {10.1111/j.1365-246X.2011.05331.x}
}

@article{Lisitsa2010,
  author  = {Lisitsa, Vadim and Vishnevskiy, Dmitriy},
  title   = {{Lebedev} scheme for the numerical simulation of wave propagation in 3D anisotropic elasticity},
  journal = {Geophysical Prospecting},
  volume  = {58},
  number  = {4},
  pages   = {619--635},
  year    = {2010},
  doi     = {10.1111/j.1365-2478.2009.00862.x}
}

@article{Koene2021,
  author  = {Koene, Erik F. M. and Robertsson, Johan O. A. and Andersson, Fredrik},
  title   = {Anisotropic elastic finite-difference modeling of sources and receivers on {Lebedev} grids},
  journal = {Geophysics},
  volume  = {86},
  number  = {2},
  pages   = {A21--A27},
  year    = {2021},
  doi     = {10.1190/geo2020-0522.1}
}

@article{PabstGregorova2013,
  author  = {Pabst, Willi and Gregorov{\'a}, Eva},
  title   = {Elastic properties of silica polymorphs -- a review},
  journal = {Ceramics--Silik{\'a}ty},
  volume  = {57},
  number  = {3},
  pages   = {167--184},
  year    = {2013},
  url     = {https://www.ceramics-silikaty.cz/2013/pdf/2013_03_167.pdf}
}

@misc{IOFFESilicon,
  author       = {{Ioffe Institute}},
  title        = {Silicon: Mechanical properties, elastic constants, lattice vibrations},
  howpublished = {Ioffe Institute, New Semiconductor Materials database},
  year         = {2026},
  note         = {Accessed 2026-05-20},
  url          = {https://www.ioffe.ru/SVA/NSM/Semicond/Si/mechanic.html}
}

\end{document}